%% file: main.tex
\documentclass{article}

\usepackage[
  left=1.25in,
  right=1.25in,
  top=1.2in,
  bottom=1.2in
]{geometry}

\usepackage{amsmath, amssymb, amsthm}
\usepackage{mathtools}
\usepackage{hyperref}
\usepackage{dsfont}

\usepackage{pgfplots}
\pgfplotsset{compat=1.18}

\usepackage[dvipsnames]{xcolor}
\usepackage{algorithm}
\usepackage{bbm}
\usepackage{enumitem}
\usepackage{algpseudocode}
\usepackage{bm}
\usepackage[section]{placeins}
\usepackage{tikz}
\usepackage{cancel}
\usepackage[style= numeric-comp,hyperref=true, doi=false,url=false,
            isbn=false,
            firstinits=true, sorting = none, 
            block=none, backend=bibtex,maxnames=99]{biblatex}

\bibliography{graphon}

\usetikzlibrary{arrows.meta,calc,positioning,fit,decorations.pathreplacing}

\newcommand{\N}{\mathbb{N}}
\newcommand{\R}{\mathbb{R}}

\newcommand{\cX}{\mathsf{X}}
\newcommand{\bfo}{\mathbf{1}}

\newcommand{\cY}{\mathsf{Y}}
\newcommand{\cC}{\mathsf{C}}
\newcommand{\cD}{\mathsf{D}}

\newtheorem{definition}{Definition}
\newtheorem{theorem}{Theorem}

\newtheorem{proposition}[theorem]{Proposition}

\newtheorem{lemma}{Lemma}

\newtheorem{remark}{Remark}
\newtheorem{example}{Example}

\newcommand{\cE}{\mathcal{E}}

\newcommand{\xc}[1]{\vspace{.1cm}
\noindent {\em #1} }

\title{
Largest $2$-regular Subgraphs in complete $S$-partite Graphs
}

\author{Yiyang Jiang$^*$ \quad and \quad Xudong Chen\footnote{Y. Jiang and X. Chen are with the Department of Electrical \& Systems Engineering, Washington University, St. Louis, MO 63130, USA.  Emails: {\tt\small  \{j.yiyang, cxudong\}@wustl.edu}. Corresponding author: Y. Jiang.
}%
}
\date{}
\begin{document}

\maketitle

\begin{abstract}
In this paper, we focus on the class of complete $S$-partite graphs, for $S$ an undirected graph possibly with self-loops, and address the problem of finding largest $2$-regular subgraphs of these graphs, which can be formulated as an integer linear program. Roughly speaking, a complete $S$-partite graph is obtained by replacing every single node of $S$ with a number of nodes, preserving the edge/non-edge relations of $S$. Our motivation in finding largest $2$-regular subgraphs is rooted in the structural systems theory, particularly in the problem of finding largest subnetworks that can sustain controllability or asymptotic stability of the corresponding subsystems. A main contribution of the paper is to show that the integer linear problem can be solved efficiently in $O(|V(S)|^3)$, independent of the order/size of the $S$-partite graph itself. Furthermore, we demonstrate through simulations that with high probability, a random $S$-partite graph contains a largest $2$-regular subgraph of the same order as its complete counterpart does. 
\end{abstract}

\tableofcontents

\section{Introduction}
Let $G$ be an undirected graph, with $V(G)$ the node set and $E(G)$ the edge set. 
A subgraph $H$ of $G$ is {\it $2$-regular} if it is a {\it node-wise disjoint union of cycles}. If, further, $V(H) = V(G)$, then $H$ is a cycle cover of $G$. 

In this paper, we focus on a special class of graphs, namely, the complete $S$-partite graphs, for $S$ an undirected graph on $q$ nodes possibly with self-loops. We address the problem of finding largest $2$-regular subgraphs in these graphs. Roughly speaking, a complete $S$-partite graph can be obtained from $S$ by replacing each single node of $S$ with a number of nodes (blowing up). A precise definition will be given soon in Section~\ref{sec:mainresult}. 
A major contribution of the paper is to show that the order of a largest $2$-regular subgraph can be obtained efficiently in $O(q^3)$, independent of the order/size of the $S$-partite graph.

\subsection{Motivation and relevance of our problem} 
Our interest in finding $2$-regular subgraphs is rooted in the structural systems theory, which is about understanding which graphs can sustain a given system property, such as controllability and stability. To elaborate, consider an $n$-dimensional linear time-invariant system $\dot x(t) = Ax(t)$. We say that the matrix $A$ is {\it compatible with $G$} if $ a_{ij} \neq 0 \Rightarrow (v_i,v_j) \in E(G)$, where $a_{ij}$ is the $ij$th entry of $A$. 
It has been shown in~\cite{kirkoryan2014decentralized} that there exists a Hurwitz matrix $A$ compatible with $G$ if and only if ({\it i}) $G$ (more precisely, the corresponding symmetric digraph $\vec G$) has a cycle cover, and ({\it ii}) every node of $G$ is connected to a self-loop. 
If $G$ is connected, then the second condition will be satisfied if $G$ has at least one self-loop. 
Now, consider the case where $G$ does not have a cycle cover and hence, there does not exist a Hurwitz matrix $A$ compatible with $G$. Then, instead of asymptotically stabilizing the entire state $x(t)$ (which is infeasible in this case), one may choose to stabilize a sub-state (i.e., a collection of the $x_i(t)$'s for $i \in \{1,\ldots, n\}$). {\it What is the maximal dimension of the sub-state?}  
This question can essentially be translated to the one of finding the largest $2$-regular subgraph of $G$.

Besides asymptotic stability, another fundamental system property that requires $G$ to have a cycle cover is (ensemble) controllability~\cite{chen2021sparse} for linear time-invariant control systems. In this setting, finding a largest $2$-regular subgraph of $G$ is key to finding a controllable subspace of maximal dimension.

Cycle covers also arise naturally in problems relevant to routing and network design. For example, in the traveling salesman problem, one aims to seek a minimum-cost, closed tour that visits every node exactly once and returns to its starting point~\cite{laporte1992traveling}. In graph terms, such a tour is a {\it Hamilton cycle}, hence a connected spanning $2$-regular subgraph. 
If one keeps the degree-two condition at every node but drops the requirement that the selected edges form a single connected tour, one obtains a cycle cover, or equivalently a {\it $2$-factor} in an undirected graph. Because minimum-weight cycle covers can be computed efficiently, they are often used as intermediate structures in approximation algorithms, where one first constructs a cycle cover and then patches its subtours into a single tour~\cite{manthey2009cyclecovers}. Similar ideas also appear when only an appropriate subset of nodes is required to belong to a cycle. For example, in the ring star problem~\cite{labbe2004ringstar}, one aims to seek a cycle, together with an assignment that links the nodes outside the cycle to nodes within, in order to balance the routing and assignment costs.



The $S$-partite graphs are natural objects for modeling networks that have multiple communities of different sizes, where the non-edges of $S$ represent the cyber- and/or physical-constraints on the connections between different communities (e.g., social networks, biological networks, and infrastructure networks such as transportation or power-grid networks). 
Note, in particular, that random graphs $G$ sampled from stochastic block models~\cite{holland1983stochastic,abbe2018community} or step-graphons $W$ (i.e., graphons that are step-functions)~\cite{lovasz2006limits,belabbas2023geometric} are $S$-partite graphs, where $S$ is the skeleton graph associated with the step-graphon. In the latter case, the random graphs $G$ are dense, and these dense random graphs $G$ behave essentially the same as the complete $S$-partite graphs from the perspective of embedding bounded-degree subgraphs (in our context, $2$-regular subgraphs) into them. 
For details, we refer the reader to the Blow-up Lemma~\cite{komlos1997blow} and its use in determining Hamiltonicity of step-graphons~\cite{chen2025hamiltonicity}. Numerical study will also be carried out in Section~\ref{sec:ns} for demonstration. 


\subsection{Literature review}

At the graph-theoretic level, our problem is exactly the {\it largest $2$-regular subgraph problem} on the restricted host class of complete $S$-partite graphs: for any host graph $G$ and any vertex set $U \subseteq V(G)$, the subgraph of $G$ induced by $U$ admits a cycle cover if and only if there exists a $2$-regular subgraph $H \subseteq G$ with $V(H)=U$. In the terminology of~\cite{Pulleyblank12}, this objective can also be viewed as a {\it maximum $2$-factor problem}, namely, finding a collection of node-wise disjoint cycles covering as many nodes as possible. A representative work in this direction is~\cite{CKKPW19}, which studies largest $2$-regular subgraphs in $3$-regular graphs and derives sharp lower bounds on their size in terms of cut-edges. A different and stricter line is the {\it maximum $r$-regular induced subgraph problem}; for $r=2$, one asks for a largest node set whose induced subgraph is itself $2$-regular~\cite{GRS12}. In our setting, we only require the retained induced complete $S$-partite subgraph to admit a cycle cover.

A closely related problem is the Hamilton-cycle problem, where the spanning $2$-regular subgraph is required to be a single cycle. When the host graph is not Hamiltonian, a natural relaxation is the longest-cycle problem, which asks for a cycle of maximum length. These questions have also been studied in multipartite settings; see, for example,~\cite{HamKpartite} for Hamiltonian cycles in $k$-partite graphs and~\cite{LCTripartite} for long cycles in balanced tripartite graphs. This perspective is close in spirit to ours, in that one asks how much of the graph can still support a prescribed cyclic structure.

Finally, we note that Hamiltonicity of step-graphons has been addressed in~\cite{belabbas2021h,belabbas2023geometric,GAO20257,chen2025hamiltonicity}. A step-graphon $W$ is said to be weakly (resp., strongly) Hamiltonian if the random graph $G$ sampled from $W$ has a cycle cover (resp., a Hamilton cycle) asymptotically almost surely. A complete characterization of their Hamiltonicity has been obtained in these works.   


\subsection{Notation} 
Let $e_1,\ldots,e_q$ be the standard basis of $\R^q$, and $\bfo$ be the vector of all ones of the appropriate dimension, which will be clear in the context. 
For $x,y\in\R^q$ we write $y\le x$ to indicate that $y_i\le x_i$ for all $i = 1,\ldots, q$. The $\ell^1$-norm is $\|x\|_1 := \sum_{i=1}^q |x_i|$.  We use $\N$ (resp., $\N_0$) to denote the set of positive (resp., nonnegative) integers, and $\R_{\ge0}$ to denote the set of nonnegative real numbers. 
For a graph $G$, let $E^*(G)$ be the space of real-valued functions on $E(G)$. Note that $E^*(G)$ can be identified with the Euclidean space $\R^{|E(G)|}$.  

\section{Problem Formulation and Main Result}\label{sec:mainresult}
We start by introducing the (complete) $S$-partite graphs: 

\begin{definition}[$S$-partite graph]\label{def:spartite}
    Let $S$ be an undirected graph on $q$ nodes, possibly with self-loops. An undirected graph $G$ is an {\bf $S$-partite graph} if there exists a graph homomorphism $\pi: G \to S$. Further, $G$ is a {\bf complete $S$-partite graph} if $$(v_i,v_j)\in  E(G)  \quad \Longleftrightarrow \quad (\pi(v_i),\pi(v_j)) \in E(S).$$
\end{definition}

For a given vector $x = (x_1,\ldots, x_q) \in \N^q_0$, we let $K_x$ be the complete $S$-partite graph, with $|\pi^{-1}(u_i)| = x_i$ for all $u_i\in V(S)$. 
The problem we address and solve in this paper is the following:
\begin{equation}\label{eq:mainproblem}
\begin{aligned}
\max\;& |V(H)| \\
\text{s.t.}\;& H \subseteq K_x \text{ is $2$-regular.}
\end{aligned}
\end{equation}

We introduce below a few key objects. For each edge $f_j=(u_k,u_\ell)\in E(S)$, allowing $u_k= u_\ell$, let $z_j:=\tfrac12(e_k+e_\ell)$ be the {\it incidence vector} of $f_j$. Let $m:= |E(S)|$ be the number of edges of $S$, and $Z\in \R^{q\times m}$ be the {\it incidence matrix} of $S$: 
\begin{equation}\label{eq:incidencematrix}
Z:= 
\begin{bmatrix}
z_1 & \cdots & z_m 
\end{bmatrix}.
\end{equation}
Further, let $\cX$ be the convex cone generated by the $z_j$'s, i.e.,  
\begin{equation}\label{def:edgecone}
  \cX := \left \{\sum_{j = 1}^m c_j z_j \mid c_j \geq 0 \quad \mbox{for all } j =1,\ldots, m \right \}.
\end{equation}

The main result of the paper, which we present below, relates the solution of~\eqref{eq:mainproblem} to the solution of the following linear program:
\begin{equation}\label{eq:LP-y}
\begin{aligned}
\max \;& \mathbf 1^{\!\top}y\\  
\text{s.t. } &  y\in\cX  \quad \mbox{and} \quad y \le x. 
\end{aligned}
\end{equation}
Let 
\begin{equation}\label{eq:defPx}
\cY_x:= \{y\in \cX \mid y \mbox{ solves~\eqref{eq:LP-y}}\}.
\end{equation}
Note that $\cY_x$ is a polytope for any given $x\in \N^q$.  
A point $y\in \cY_x$ is said to be an {\it extremal solution} if $y$ is a vertex of the polytope. 

The main result of the paper is then the following:  

\begin{theorem}\label{thm:main}
   Let $x\in \N^q$ and $S$ be an undirected graph on $q$ nodes, possibly with self-loops. Then, the following hold:
   \begin{enumerate}
   \item Every extremal solution to~\eqref{eq:LP-y} is integer valued.
   \item An integer solution to~\eqref{eq:LP-y} can be obtained in time $O(q^3)$.
   \item Suppose that $x_i \geq 3$ for all $i = 1,\ldots, q$; then, for any integer solution $y$ to~\eqref{eq:LP-y},  the complete $S$-partite graph $K_y$ has a  cycle cover $H$ with at most $q$ cycles. Moreover, any such subgraph $H$ is a solution to~\eqref{eq:mainproblem}.  
   \end{enumerate}
\end{theorem}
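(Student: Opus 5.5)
We treat the three items in order, reformulating the linear program~\eqref{eq:LP-y} as a fractional $b$-matching problem on $S$. Write $y = Zc$ with $c\in\R^m_{\ge 0}$. Then $y_i = \tfrac12\sum_{j: f_j \ni u_i} c_j$, where a self-loop at $u_i$ contributes $c_j$ to $y_i$ and a non-loop edge contributes $c_j/2$ to each endpoint. With $w := 2c$, the constraint $y\le x$ becomes the degree constraint $\sum_{j\ni u_i} w_j \deg_j(u_i) \le 2x_i$, where a loop counts twice. The objective $\mathbf 1^\top y$ equals $\sum_j w_j$. So~\eqref{eq:LP-y} is the fractional $2x$-matching problem on $S$ with loops allowed.

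\emph{Item 1.} It is a classical fact that the fractional $b$-matching polytope has half-integral vertices: the vertices of $\{w\ge0,\ \text{degree}(w)\le b\}$ have entries in $\tfrac12\mathbb Z$. Their support consists of a forest-like part with integer weights together with odd cycles carrying weight $\tfrac12$. With $b=2x$ even, we will show that every vertex $y$ of the optimal face $\cY_x$ is integral. The plan is to take an extremal $y$ and argue that at an optimum every node $u_i$ with $y_i<x_i$ is not adjacent to any support of slack. More precisely, first show that the optimal value is $\sum_i x_i - \tfrac12\,\mathrm{def}$, where $\mathrm{def}$ is given by a Tutte–Berge/Gallai–Edmonds type formula for fractional matchings, $\max_{A}\,(x(I(A)) - x(N(I(A))))$ with $I(A)$ independent (loopless). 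This formula is integral, and the coordinates of a vertex of the face are forced: $y_i=x_i$ off $I\cup N(I)$ and in $N(I)$, while $y$ is a vertex of a transportation-type polytope on $I$. That polytope is bipartite, hence totally unimodular with integer right-hand sides. Odd half-integral cycles would then be absorbed by the doubling, since $2x$ is even and a half-integral odd cycle for $b=2x$ gives integral $y$. This is the step I expect to be the main obstacle: relating vertices of $\cY_x$ (the image under $Z$) rather than vertices in $c$-space, and handling self-loops and odd cycles carefully.

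\emph{Item 2.} We would compute via max-flow on the bipartite double cover of $S$. Duplicate the nodes into $u_i^+, u_i^-$, put an arc $u_k^+\to u_\ell^-$ for each edge (both directions, and loops as $u_k^+\to u_k^-$), with source capacities $x_i$ and sink capacities $x_i$. A maximum flow $F$ gives $y_i = \tfrac12(\text{out}_i+\text{in}_i)$, and the optimal value equals $F$. Integer rounding then follows as in item 1. The network has $O(q)$ nodes with integer capacities, so a strongly polynomial algorithm such as push-relabel runs in $O(q^3)$, independent of $\|x\|$.

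\emph{Item 3.} For the upper bound, any 2-regular $H\subseteq K_x$ yields $y^H_i := |V(H)\cap\pi^{-1}(u_i)|\le x_i$. Counting each edge of $H$ mapped to $f_j$ gives $y^H = Z c$ with $c_j$ the number of such edges, since each node has degree 2. Hence $y^H\in\cX$ and $|V(H)|\le$ the LP optimum. For the construction, take an integer optimal $y$ from the flow decomposition. Each integral flow corresponds to a 2-regular multigraph structure on $S$ with node multiplicities $y_i$, namely a closed-walk decomposition. Blow each closed walk up into cycles of $K_y$ using distinct copies of nodes. The hypothesis $x_i\ge3$ and the optimality of $y$ are used to avoid degenerate 2-cycles, i.e.\ back-and-forth walks on a single edge. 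The remedy is to merge walks into at most one per component of the support, giving at most $q$ cycles, and to use loops or third copies to repair short walks. Finally, $H$ attains the LP bound and hence solves~\eqref{eq:mainproblem}.
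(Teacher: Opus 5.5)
\emph{Item 1.} Your Item~1 is not yet a proof. The step you flag as the main obstacle is the whole content, and the claims around it need fixing.
\begin{itemize}
\item The normalization should be $w:=c$: then the degree bound is $2x$ and the objective is $\sum_e w_e=\bfo^\top y$.
\item The optimal value is $\sum_i x_i-\max_I\bigl(x(I)-x(N(I))\bigr)$ over loopless independent sets $I$, without the factor $\tfrac12$. With the factor, the value need not be an integer.
\item More importantly, an integral optimal value says nothing about the vertices of $\cY_x$. You give no reason why $y$ is ``forced'' on $N(I)$ and on $R:=V(S)\setminus(I\cup N(I))$.
\end{itemize}

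The missing idea is complementary slackness against one fixed optimal dual.
\begin{itemize}
\item Halved, the dual of~\eqref{eq:P-c} is a fractional vertex cover $\rho\ge0$ with $\rho_u+\rho_v\ge1$ on every edge, loops ($u=v$) included. It has an optimum in $\{0,\tfrac12,1\}^q$. With $I:=\{\rho=0\}$ one gets $\{\rho=1\}=N(I)$, because $x\ge1$.
\item Every optimal $c$, not just every vertex, then has $y_i=x_i$ wherever $\rho_i>0$, and $c_e=0$ wherever $\rho_u+\rho_v>1$.
\item Hence $\cY_x$ is the point $x|_{R\cup N(I)}$ times the image of the bipartite transportation polytope $\{\tilde c\ge0:\ \sum_{i\in I}\tilde c_{ij}=x_j\ (j\in N(I)),\ \sum_j\tilde c_{ij}\le x_i\ (i\in I)\}$, with $\tilde c=c/2$, under the map $\tilde c\mapsto(\sum_j\tilde c_{ij})_{i\in I}$.
\item The vertices of this image are images of integral vertices. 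Odd cycles live in $R$, so they need no separate treatment.
\end{itemize}

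Completed this way, your route genuinely differs from the paper's, which stays in $c$-space. The paper first shows that vertices of $\cC_x$ are integral, using total unimodularity on the bipartite double cover and the surjection $\phi$. It then removes half-integer entries of $y=Zc$ by alternating $\pm1$ along a path joining two half-integer nodes. Odd length would improve the objective; even length writes $c$ as the midpoint of two optimal solutions. Your route buys an explicit description of the whole optimal face. The paper's route yields the rounding procedure needed in Item~2.

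\emph{Item 2.} That rounding procedure is what your Item~2 lacks. ``Integer rounding follows as in item~1'' points to an existential argument, so the $O(q^3)$ bound is not established. The paper repeats the alternating-path step at most $\lfloor q/2\rfloor$ times, at cost $O(q+m)$ each. Alternatively, you could read $\rho$ off a minimum cut and then solve the transportation problem on $I$ with an integral flow.

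\emph{Item 3.} Your upper bound is correct and more explicit than the paper's. The construction, however, has two problems.
\begin{itemize}
\item It must start from an \emph{arbitrary} integral optimum $y$, not one produced by your flow. So you need an integral $c$ with $Zc=y$; the paper also assumes this without comment. It holds because the vertices of $\{c\ge0:\ Zc=y\}$ are integral. Each support component is a tree or is unicyclic with an odd cycle (a loop counts as one), and the right-hand side $2y$ of the degree equations is even.
\item Degenerate pieces cannot be ``repaired'' inside $K_y$, since $y$ is fixed; they must be shown not to occur. After merging into one Euler circuit per component of the support multigraph, the only failures are circuits of length at most $2$. These are:
\begin{itemize}
\item a loop with $y_i=c(u_i,u_i)\le2$ (the length-$1$ case is not a back-and-forth walk, and your description misses it);
\item a double edge with $y_i=y_j=1$.
\end{itemize}
In these cases $y+e_i$, respectively $y+e_i+e_j$, is feasible: $e_i$ and $e_i+e_j$ lie in $\cX$, and $x\ge3$ gives the bound. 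It also has a larger objective, contradicting optimality.
\end{itemize}
With these cases excluded, lifting each Euler circuit through a bijection onto the blown-up parts gives the required Hamilton cycles, exactly as in the paper.
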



The remainder of the section is organized as follows: Section~\ref{sec:items12} establishes Items~1 and~2 of Theorem~\ref{thm:main} by reformulating the linear program~\eqref{eq:LP-y} as an auxiliary optimization problem on a bipartite graph. Section~\ref{sec:KyHam} develops a Hamiltonicity result for complete $S$-partite graphs, which is key to proving Item~3 in Section~\ref{sec:thm1item3}. Section~\ref{sec:ns} presents numerical studies on sampled $S$-partite graphs. The paper ends with conclusions.

\section{Proof of Items~1 and~2 of Theorem~\ref{thm:main}}\label{sec:items12}

\subsection[Half-integrality of the linear program]%
{Half-integrality of the Linear Program}\label{sec:half-int}

Note that $y\in \cX$ holds if and only if there exists a vector $c\in \R^m_{\geq 0}$ such that $y = Zc$. 
Since each incidence vector $z_j$ is a probability vector (i.e., its column sum is one), $\bfo^\top Z = \bfo^\top$ 
(note that the two vectors of ones are of different dimensions). It follows that the linear program~\eqref{eq:LP-y} can be re-formulated as
\begin{equation}\label{eq:P-c}
\begin{aligned}
\max \;& \mathbf 1^{\!\top}c \\[2pt]
\text{s.t. } & Z c \le x \quad \mbox{and} \quad c  \geq  0.
\end{aligned}
\end{equation}
In particular, the maximal value of~\eqref{eq:P-c} is the same as the maximal value of~\eqref{eq:LP-y}.
Similar to~\eqref{eq:defPx}, we introduce the solution set  of~\eqref{eq:P-c} as
$$
\cC_x:= \left \{c \in \R^m \mid c \mbox{ solves~\eqref{eq:P-c}} \right \},
$$
which is a polytope. The goal of this section is to establish the following result:

\begin{proposition}\label{prop:ytoc}
    Every vertex $c$ of $\cC_x$ is integer valued. 
\end{proposition}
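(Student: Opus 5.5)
The plan is to show that every vertex of $\cC_x$ is a vertex of the polyhedron $P_x:=\{c\in\R^m \mid Zc\le x,\ c\ge 0\}$, and that every vertex of $P_x$ is integral. The first claim is standard: $\cC_x$ is the face of $P_x$ on which the linear functional $\bfo^\top c$ attains its maximum, and a vertex of a face is a vertex of the whole polyhedron. So it suffices to study vertices of $P_x$. I would first rescale the constraints as $2Zc\le 2x$. The matrix $M:=2Z$ has integer entries: the column of a non-loop edge $(u_k,u_\ell)$ has a $1$ in rows $k$ and $\ell$, and the column of a loop at $u_k$ has a $2$ in row $k$. The right-hand side $b:=2x$ is an even integer vector. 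In effect we are looking at the fractional $b$-matching polytope of $S$ with even $b$, and we need to show it is integral.

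Fix a vertex $c$ of $P_x$ and let $F\subseteq E(S)$ be its support. Let $T\subseteq V(S)$ be the set of tight rows, i.e., the $k$ with $(Mc)_k=b_k$. Since $c$ is a vertex, $c_F$ is the unique solution of $M_{T,F}\,c_F=b_T$. Hence $M_{T,F}$ has full column rank $|F|$, and after discarding redundant rows we may take a square nonsingular subsystem. I would then split this system along the connected components of the subgraph $(V(S),F)$; the matrix is block diagonal over these components. Counting rows and columns shows that each component $C$ has at most $|V(C)\cap T|\le|V(C)|$ edges. Therefore each component is either a tree or a unicyclic graph, where a loop counts as a cycle. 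Moreover, its cycle cannot be an even cycle, because the $\pm$ alternating vector on an even cycle lies in the kernel of $M$ and this would contradict nonsingularity.

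Next I would solve for the values of $c$ component by component by peeling leaves. Suppose a leaf $v$ is tight and $e$ is its unique incident support edge. Then $c_e=b_v$, or $c_e=b_v/2=x_v$ in the case of a loop, and this is an integer. Substituting it into the equation at the neighbouring node keeps every right-hand side an even integer, as long as the removed value is subtracted with coefficient $1$. A rank count shows that in a tree component every edge gets determined this way: each tree component has at most one non-tight node, so we can always peel from a tight leaf. In a unicyclic component, peeling the pendant trees leaves only the cycle. If the cycle is a loop at $u_k$, its value is $(b'_k)/2$ with $b'_k$ even, hence an integer. If it is an odd cycle $e_1,\dots,e_r$ with equations $c_{e_i}+c_{e_{i+1}}=b'_i$, each $b'_i$ is an even integer. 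Solving gives $c_{e_1}=\tfrac12\sum_i(-1)^{i+1}b'_i$, and the same formula holds for each $c_{e_j}$ up to relabelling. This is half of an even integer, so it is an integer.

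The main obstacle I expect is the bookkeeping that keeps the residual right-hand sides even throughout the peeling. The subtlety is that a peeled edge feeds into its neighbour's equation with coefficient $1$, while a loop carries coefficient $2$. So I need to track parity carefully: subtracting an integer $c_e$ from an even $b'_k$ can make it odd. To handle this, I would instead argue directly with Cramer's rule on the nonsingular block. For a tree or odd-unicyclic incidence block, the determinant is $\pm1$ or $\pm2$, with the value $\pm 2$ arising only from the odd cycle or loop. In that case I would check that the adjugate applied to $b$, which lies in $2\mathbb{Z}^{|T|}$, is divisible by $2$. This gives integrality without tracking parities step by step, and it is the version I would write out carefully.
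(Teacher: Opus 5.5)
Your proof is correct, but it takes a different route from the paper. The paper never analyzes the vertex structure of $\{c\ge 0: Zc\le x\}$ directly. Instead:
\begin{enumerate}
\item It passes to the bipartite double cover $B$ of $S$, in which a loop at $u_i$ becomes the edge $u_i'u_i''$.
\item It uses the fold/split maps $\phi,\psi$ to show that $\phi$ maps the optimal set $\cD_x$ of the $B$-problem onto $\cC_x$.
\item It gets integral vertices of $\cD_x$ from total unimodularity of the bipartite incidence matrix $\hat Z$ together with Hoffman--Kruskal.
\item It concludes because every vertex of a linear image of a polytope is the image of a vertex, and $\phi$ only adds entries.
\end{enumerate}
That is short modulo the TU black box, and the same bipartite reformulation is what the paper later solves by max-flow for Item~2.

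Your route is the classical basic-solution analysis of the fractional $b$-matching polytope with $b=2x$ even. Support components are either trees with at most one slack node and block determinant $\pm1$, or odd-unicyclic with every node tight and block determinant $\pm2$. It is self-contained and yields more structure. The values on tree components are even, and every node of a unicyclic component is tight. Hence $y=Zc$ is automatically integral at every vertex: $y_i=\tfrac12\sum_{e\ni i}c_e$ with all $c_e$ even on tree components, $y_i=x_i$ on unicyclic components, and $y_i=0$ at isolated nodes. That gives Proposition~\ref{prop:integrality} directly, without the half-integer-node and path-switching argument of Subsection~\ref{sec:thm1item1}.

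The obstacle you flag at the end does not arise, so you need not retreat to Cramer's rule. The edge at a tight leaf $v$ is never a loop, since a loop contributes $2$ to the degree. Its value is the current residual $b'_v$ (not $b_v$), which is even by induction. So $c_e$ is even and the neighbour's residual stays even. Odd values can occur only on the final loop or odd cycle, which feeds no further equation.

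If you do use Cramer's rule, there is likewise nothing to check. $\operatorname{adj}(M_C)$ is an integer matrix and $b_C$ has even entries, so $\operatorname{adj}(M_C)\,b_C/\det M_C$ is integral once $\det M_C\in\{\pm1,\pm2\}$. Just apply it block by block, since the determinant of the full block-diagonal matrix can be $\pm 2^k$.
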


Note that if $c$ is a solution to~\eqref{eq:P-c}, then $y = Zc$ is a solution to~\eqref{eq:LP-y}; this holds because $\bfo^\top c = \bfo^\top y$. The map $c \in \cC_x \mapsto Zc \in \cY_x$ is clearly surjective. Given a vertex $y \in \cY_x$, there must exist a vertex $c\in \cC_x$ such that $y = Zc$. 
Thus, an immediate consequence of Proposition~\ref{prop:ytoc} is that any vertex of $\cY_x$ is {\it half-integer valued}, i.e., each entry $y_i$ is a  multiple of $1/2$. 




The remainder of the subsection is to establish Proposition~\ref{prop:ytoc}. 
To this end, consider the bipartite graph $B$ derived from $S$ as follows:  
The node set $V(B)$ is given by $V(B) = V'(B) \sqcup V''(B)$ where
\begin{equation*}
V'(B) := \{u'_1,\ldots, u'_q\}  \mbox{ and } V''(B):=
\{u''_1,\ldots, u''_q\}. 
\end{equation*}
The edge set $E(B)$ is given by
\begin{equation*}
\begin{aligned}
E(B) :=\;& \{(u'_i,u''_j), (u'_j, u''_i) \mid (u_i, u_j)\in E(S)\}. 
\end{aligned}
\end{equation*}
See Figure~\ref{fig:StoB} for illustration.

\begin{figure}[ht]
  \centering
  \begin{tikzpicture}[
      >=Latex,
      line width=0.9pt,
      dot/.style={circle,inner sep=0pt,minimum size=4pt,fill=black},
      baseedge/.style={shorten >=4pt,shorten <=4pt,draw=black!55},
      loophl/.style={shorten >=4pt,shorten <=4pt,draw=blue!70,line width=1.15pt},
      pairhl/.style={shorten >=4pt,shorten <=4pt,draw=green!55!black,line width=1.15pt},
      every label/.style={font=\small}
    ]

    \begin{scope}[scale=1.5,local bounding box=Sfig]
      \coordinate (ui) at (0,0.3);
      \coordinate (uj) at (2,0.3);
      \coordinate (uk) at (1,-0.3);

      \draw[baseedge] (ui) -- (uj);
      \draw[pairhl]  (uj) -- (uk);
      \draw[baseedge] (uk) -- (ui);
      \draw[loophl,loop left,distance=16pt,in=110,out=70] (ui) to (ui);

      \node[dot,label=below:$u_i$] at (ui) {};
      \node[dot,label=below:$u_j$] at (uj) {};
      \node[dot,label=below:$u_k$] at (uk) {};
    \end{scope}

    \begin{scope}[xshift=6.5cm,scale=1.5,local bounding box=Bfig]
      \coordinate (uiL) at (0,  0.6);
      \coordinate (ujL) at (0,  0.0);
      \coordinate (ukL) at (0, -0.6);
      \coordinate (uiR) at (2,  0.6);
      \coordinate (ujR) at (2,  0.0);
      \coordinate (ukR) at (2, -0.6);

      \draw[loophl]  (uiL) -- (uiR);
      \draw[baseedge] (uiL) -- (ujR);
      \draw[baseedge] (ujL) -- (uiR);
      \draw[pairhl]   (ujL) -- (ukR);
      \draw[pairhl]   (ukL) -- (ujR);
      \draw[baseedge] (ukL) -- (uiR);
      \draw[baseedge] (uiL) -- (ukR);

      \node[dot,label=left :$u'_i$] at (uiL) {};
      \node[dot,label=left :$u'_j$] at (ujL) {};
      \node[dot,label=left :$u'_k$] at (ukL) {};
      \node[dot,label=right:$u''_i$] at (uiR) {};
      \node[dot,label=right:$u''_j$] at (ujR) {};
      \node[dot,label=right:$u''_k$] at (ukR) {};
    \end{scope}

    \def\matyshift{-1.2mm}
    \def\labelyshift{-1.2mm}

    \node (ZB) [anchor=north,font=\scriptsize]
      at ($(Bfig.south)+(0,\matyshift)$)
      {$\begingroup\setlength{\arraycolsep}{4pt}\displaystyle
        \hat Z=\begin{pmatrix}
          1 & 1 & 0 & 0 & 0 & 0 & 1\\
          0 & 0 & 1 & 1 & 0 & 0 & 0\\
          0 & 0 & 0 & 0 & 1 & 1 & 0\\
          1 & 0 & 1 & 0 & 0 & 1 & 0\\
          0 & 1 & 0 & 0 & 1 & 0 & 0\\
          0 & 0 & 0 & 1 & 0 & 0 & 1
        \end{pmatrix}\endgroup$};

    \node (ZS) [anchor=base,font=\scriptsize]
      at ($(Sfig.south |- ZB.base)$)
      {$\begingroup\setlength{\arraycolsep}{4pt}\displaystyle
        Z=\frac12\begin{pmatrix}
          2 & 1 & 0 & 1\\
          0 & 1 & 1 & 0\\
          0 & 0 & 1 & 1
        \end{pmatrix}\endgroup$};

    \node[font=\small] at ($(ZS.south |- ZB.south) + (0,\labelyshift)$) {(a)};
    \node[font=\small] at ($(ZB.south)          + (0,\labelyshift)$) {(b)};

  \end{tikzpicture}

  \caption{(a) Skeleton graph $S$. (b) The associated bipartite graph \(B\). The highlighted edges show the correspondence between \(S\) and \(B\). We also present the incidence matrices \(Z\) and \(\hat Z\).}
  \label{fig:StoB}
\end{figure}
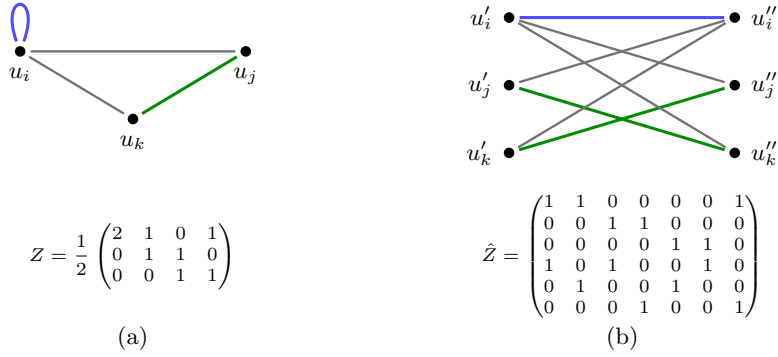

Let $\hat Z$ be the node-edge incidence matrix of $B$ (without the normalization factor $1/2$).  
We organize the rows of the matrix in a way that the $i$th (resp., $(i + q)$th) row, for $1\leq i \leq q$, corresponds to the node $u'_i$ (resp., $u''_i$). 
Given the vector $x\in \N^q$, we let $\hat x:= (x; x)\in \N^{2q}$.
We then introduce an auxiliary linear program:
\begin{equation}\label{eq:M-d}
\begin{aligned}
\max \; & \bfo^{\!\top} d \\[2pt]
\text{s.t. } & \hat Z d \le \hat x \quad \mbox{and} \quad d  \geq  0.
\end{aligned}
\end{equation}
Similarly, let $\cD_x$ be the set of solutions to~\eqref{eq:M-d}. 

Given a vector $c\in \cC_x$ (resp., $d\in \cD_x$), we let $c(u_i,u_j)$ (resp., $d(u'_i, u''_j)$) be the entry of $c$ (resp., $d$) corresponding to the edge $(u_i,u_j) \in E(S)$ (resp., $(u'_i,u''_j)\in E(B)$). In other words, we view $c$ (resp., $d$) as an element of $E^*(S)$ (resp., $E^*(B)$). 
We have the following result:

\begin{lemma}\label{lem:relcandd}
The following two items hold:
\begin{enumerate} 
\item The maximal value of~\eqref{eq:M-d} is the same as the maximal value of~\eqref{eq:P-c}. 
\item For any vertex $c\in \cC_x$, there exists a vertex $d\in \cD_x$ such that
$$
c(u_i, u_j) = 
\begin{cases}
d(u'_i, u''_i) & \mbox{if } u_i = u_j, \\
d(u'_i,u''_j) + d(u'_j, u''_i) & \mbox{if } u_i\neq u_j.
\end{cases}
$$
\end{enumerate}
\end{lemma}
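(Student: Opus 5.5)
The plan is to build two explicit linear maps between the feasible regions of~\eqref{eq:P-c} and~\eqref{eq:M-d}, both of which preserve the objective value. Item~1 then follows at once, and Item~2 follows from a face argument for linear images of polytopes.

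\emph{The splitting map $\psi$.} For $c\in E^*(S)$, define $\psi(c)\in E^*(B)$ by
\begin{itemize}
\item $\psi(c)(u'_i,u''_i):=c(u_i,u_i)$ for a self-loop;
\item $\psi(c)(u'_i,u''_j)=\psi(c)(u'_j,u''_i):=\tfrac12 c(u_i,u_j)$ for $u_i\neq u_j$.
\end{itemize}
The row of $\hat Z$ indexed by $u'_i$ (and also the row indexed by $u''_i$) evaluated at $\psi(c)$ equals
$$c(u_i,u_i)+\tfrac12\sum_{j\neq i}c(u_i,u_j)=(Zc)_i .$$
Hence $\hat Z\psi(c)=(Zc;Zc)\le \hat x$ whenever $Zc\le x$. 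Also $\psi(c)\ge 0$ and $\bfo^\top\psi(c)=\bfo^\top c$.

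\emph{The merging map $\varphi$.} Conversely, define $\varphi(d)\in E^*(S)$ by exactly the formula in Item~2. A direct computation gives
$$(Z\varphi(d))_i=d(u'_i,u''_i)+\tfrac12\sum_{j\neq i}\big(d(u'_i,u''_j)+d(u'_j,u''_i)\big)=\tfrac12\big[(\hat Zd)_{u'_i}+(\hat Zd)_{u''_i}\big]\le x_i .$$
Moreover $\varphi(d)\ge0$ and $\bfo^\top\varphi(d)=\bfo^\top d$. So both maps send feasible points to feasible points and preserve the objective. This gives both inequalities between the two optimal values, which is Item~1. As a consequence, $\psi(\cC_x)\subseteq\cD_x$ and $\varphi(\cD_x)\subseteq\cC_x$. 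In addition, $\varphi\circ\psi$ is the identity on $E^*(S)$.

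\emph{Item~2.} Fix a vertex $c$ of $\cC_x$ and choose a linear functional $\ell$ on $E^*(S)$ whose maximum over $\cC_x$ is attained uniquely at $c$. Since $\varphi(\cD_x)\subseteq\cC_x$, the functional $\ell\circ\varphi$ attains its maximum over $\cD_x$ exactly on
$$F:=\cD_x\cap\varphi^{-1}(c).$$
This set is nonempty because $\psi(c)\in F$, and its value there is $\ell(c)$. Thus $F$ is a nonempty face of the polytope $\cD_x$. Any vertex $d$ of $F$ is then a vertex of $\cD_x$ and satisfies $\varphi(d)=c$, which is precisely the required relation.

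The only delicate point is the inclusion $\varphi(\cD_x)\subseteq\cC_x$, i.e., that $\varphi$ maps \emph{optimal} points to optimal points. Feasibility alone is not enough here; it is the equality of optimal values from Item~1 that secures it. The proof of Item~1 must therefore come first.
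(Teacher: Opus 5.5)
Your proposal is correct and takes essentially the same route as the paper: the same splitting map $\psi$ and merging map (the paper's $\phi$), the same two inequalities for Item~1, and, for Item~2, the same observation that $\phi\circ\psi$ is the identity and that both maps preserve optimality. Your supporting-functional and face argument makes explicit a step the paper leaves implicit: a vertex of $\cC_x$ lifts to a \emph{vertex} of $\cD_x$, not merely to some point of $\cD_x$.
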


\begin{proof}
To establish item~1, we first introduce two maps $\phi$ and $\psi$. The first map   
$\phi: E^*(B)\to E^*(S)$ is defined by sending a given $d$ to the element 
$c:= \phi(d)$ 
such that for any edge $(u_i,u_j)$ of $S$,  
$$
c(u_i,u_j):=
\begin{cases}
d(u'_i, u''_i) & \mbox{if } u_i = u_j, \\
d(u'_i, u''_j)+d(u'_j,u''_i) & \mbox{if } u_i \neq u_j.
\end{cases}
$$ 
The second map $\psi: E^*(S)\to E^*(B)$ is defined by sending a given~$c$ to the element~$d$ such that for any edge $(u'_i,u''_j)$ of $B$, 
$$
d(u'_i,u''_j):=
\begin{cases}
c(u_i,u_i) & \mbox{if } u_i = u_j, \\
\frac{1}{2}c(u_i,u_j) & \mbox{if } u_i\neq u_j. 
\end{cases}
$$
It is clear from the definition that $\bfo^\top \phi(d) = \bfo^\top d$ and $\bfo^\top \psi(c) = \bfo^\top c$. 
Also, note that $\phi\cdot \psi$ is the identity map. 

Now, let $\mu_c$ and $\mu_d$ be the maximal values of~\eqref{eq:P-c} and~\eqref{eq:M-d}, respectively. We first show that $\mu_c \geq \mu_d$.
Let $d$ be a solution to~\eqref{eq:M-d}, $(y',y'') := \hat Z d$, and $y:= Z \phi(d)$. Then, by the definition of $\phi$, it is not hard to see that $y = \frac{1}{2}(y' + y'')$ (one can show that the equality holds entry-wise) and hence, $y\leq x$.   
It follows that $\mu_c \geq \bfo^\top \phi(d) = \bfo^\top d = \mu_d$. 
Conversely, let $c$ be a solution to~\eqref{eq:P-c}, $y:= Z c$, and $(y',y''):= \hat Z\psi(c)$. Then, by the definition of $\psi$, we have that $y = y' = y''$, so $(y', y'') \leq \hat x$. It follows that $\mu_d \geq \bfo^\top \psi(c) = \bfo^\top c = \mu_c$.   We thus conclude that $\mu_c = \mu_d$.

To establish item~2, we note that $\psi(c)\in \cD_x$ for any $c\in \cC_x$ and, conversely, $\phi(d)\in \cC_x$ for any $d\in \cD_x$. Since $\phi\cdot \psi: \cC_x\to \cC_x$ is the identity map, $\phi: \cD_x\to \cC_x$ is surjective. Thus, for any vertex $c$ of $\cC_x$, there exists a vertex $d$ of $\cD_x$ such that $c = \phi(d)$.    
\end{proof}

Now, to prove Proposition~\ref{prop:ytoc}, it remains to establish the following lemma:

\begin{lemma} \label{lem:TU-D}
If $d$ is a vertex of $\cD_x$, then $d$ is integer valued. 
\end{lemma}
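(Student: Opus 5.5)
The plan is to use total unimodularity of the node--edge incidence matrix $\hat Z$ of the bipartite graph $B$, and then the standard fact that a polyhedron $\{d : Ad \le b\}$ with $A$ totally unimodular and $b$ integral has only integral vertices. The point needing care is that $\cD_x$ is not itself a polyhedron of this form: it is the optimal face of one. So I will pass through the feasible polyhedron.

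\emph{Step 1.} Show that $\hat Z$ is totally unimodular. By construction, $B$ has no self-loops: a loop $(u_i,u_i)$ of $S$ becomes the ordinary edge $(u'_i,u''_i)$. Moreover, every edge of $B$ joins $V'(B)$ to $V''(B)$. Hence every column of $\hat Z$ has exactly two entries equal to $1$, one among the first $q$ rows and one among the last $q$ rows.

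I would then give the standard induction on the size of a square submatrix $M$. If some column of $M$ is zero, then $\det M = 0$. If some column has a single nonzero entry, expand along that column and apply induction. Otherwise every column of $M$ has exactly two $1$'s, one in each row block. In that case the sum of the $V'$-rows minus the sum of the $V''$-rows is zero, so $\det M = 0$. Alternatively, one can cite the classical result that incidence matrices of bipartite graphs are totally unimodular.

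\emph{Step 2.} Set $A := \begin{bmatrix} \hat Z \\ -I \end{bmatrix}$ and $b := (\hat x; 0)$. Then the feasible region of~\eqref{eq:M-d} is $P := \{d : Ad \le b\}$. Appending $-I$ preserves total unimodularity, and $b$ is integral since $x \in \N^q$. By the Hoffman--Kruskal theorem (or directly by Cramer's rule, since a vertex solves a nonsingular subsystem $A_I d = b_I$ with $\det A_I = \pm 1$), every vertex of $P$ is integral.

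\emph{Step 3.} Relate vertices of $\cD_x$ to vertices of $P$. The polyhedron $P$ is bounded: each coordinate satisfies $0 \le d_e \le \min$ of the two corresponding entries of $\hat x$. The objective $\bfo^\top d$ is bounded on $P$, so $\cD_x$ is a nonempty face of $P$, namely $P \cap \{\bfo^\top d = \mu_d\}$ with $\mu_d$ the optimal value. A vertex of a face of a polytope is a vertex of the polytope: if $d \in \cD_x$ were a proper convex combination of two distinct points of $P$, both points would have objective value $\mu_d$ and hence lie in $\cD_x$. Therefore every vertex of $\cD_x$ is a vertex of $P$, and is integral by Step 2.

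\emph{Main obstacle.} The only genuinely nontrivial ingredient is the total unimodularity in Step 1. It hinges on the bipartite doubling: the original $Z$, with self-loop columns $e_k$ and odd cycles in $S$, is not totally unimodular, which is exactly why only half-integrality survives for $\cC_x$ and $\cY_x$. I would make sure the argument handles the loop-derived edges $(u'_i,u''_i)$ correctly. They are ordinary bipartite edges, so the two-block row-sum argument applies unchanged.
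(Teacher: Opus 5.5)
Your proposal is correct and follows the same route as the paper. Both arguments use the total unimodularity of the bipartite incidence matrix $\hat Z$, apply Hoffman--Kruskal to the feasible polytope $\{d \ge 0 : \hat Z d \le \hat x\}$, and observe that a vertex of the optimal face $\cD_x$ is a vertex of that polytope. You additionally make explicit two steps the paper only cites or asserts: the inductive total-unimodularity argument, and the proof that vertices of the optimal face are vertices of the feasible polytope.
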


\begin{proof}
Consider the set
$Q := \{d\in\R^{|E(B)|}_{\ge  0} \mid \hat Z d \leq \hat x \}$. 
We claim that $Q$ is  an {\it integral} polytope. To wit, note that the matrix $\hat Z$, being the incidence matrix of a bipartite graph, is known~\cite[Thm.~5.25]{korteVygen2012} to be {\it totally unimodular} (i.e., the determinant of every minor of $\hat Z$ is either $0$, $1$, or $-1$). 
Since $\hat x$ is integer valued, 
by the Hoffman-Kruskal Theorem~\cite[Thm.~5.20]{korteVygen2012}, the polytope $Q$ is integral. Finally, note that the linear program~\eqref{eq:M-d} can be written as 
$$
\max \bfo^\top d \quad \mbox{s.t. } d\in Q.
$$
An extremal solution to the above linear program is necessarily a vertex of $Q$. This completes the proof.  
\end{proof}



\begin{remark}\normalfont
Integrality of~\eqref{eq:M-d} can also be seen from the fact that its dual is a {\it vertex cover} problem, whose extremal solutions are integer valued  for bipartite graphs~\cite{SchrijverCO}. 
\end{remark}

\subsection{Proof of Item~1 of Theorem~\ref{thm:main}}\label{sec:thm1item1}





If $y\in \cY_x$ is a vertex, then there exists a vertex $c\in \cC_x$ such that $y = Zc$. By Proposition~\ref{prop:ytoc}, any vertex of $\cC_x$ is integer valued. Thus, to establish item~1 of Theorem~\ref{thm:main}, it suffices to prove the following result: 

\begin{proposition}\label{prop:integrality}
    If $c$ is a vertex of $\cC_x$, then $y = Zc$ is integer valued.  
\end{proposition}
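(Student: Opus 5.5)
The plan is to argue by contradiction via a perturbation along a path in the support of $c$. The only facts used are that $c$ is an integer vertex of $\cC_x$ (Proposition~\ref{prop:ytoc}) and that $x$ is integer valued.

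\textbf{Parity reformulation.} By Proposition~\ref{prop:ytoc}, $c$ is integer valued. For a node $u_i$ of $S$, the definition of $Z$ gives
$$
y_i = c(u_i,u_i) + \tfrac12 \sum_{j\neq i,\ (u_i,u_j)\in E(S)} c(u_i,u_j).
$$
Hence $y_i\in\N_0$ if and only if the \emph{non-loop weighted degree} $\delta_i:=\sum_{j\ne i} c(u_i,u_j)$ is even. Call $u_i$ \emph{odd} if $\delta_i$ is odd. For an odd node, $y_i$ is a half-integer, and since $y_i\le x_i$ with $x_i$ an integer, we get $y_i\le x_i-\tfrac12$. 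So every odd node has strictly slack constraint.

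\textbf{Pairing odd nodes.} Let $S_c$ be the subgraph of $S$ formed by the non-loop edges $e$ with $c(e)>0$. In each connected component $C$ of $S_c$ we have $\sum_{u_i\in C}\delta_i = 2\sum_{e\in E(C)} c(e)$, which is even. So, by the handshake argument, each component contains an even number of odd nodes. Suppose some node is odd. Then there is a second odd node $u_j$ in the same component, and hence a simple path $P=(u_i=w_0,w_1,\dots,w_\ell=u_j)$ in $S_c$ with every edge carrying $c>0$.

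\textbf{Perturbation.} Define $h\in E^*(S)$ by $h(w_{k-1},w_k)=(-1)^{k-1}$ on the edges of $P$ and $h=0$ elsewhere, and set $c_\pm:=c\pm\varepsilon h$.

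1. For interior nodes of $P$, the contributions $\pm\varepsilon/2$ from the two incident path edges cancel, so $Zc_\pm$ agrees with $y$ there.
2. At the endpoints, $Zc_\pm$ changes by $\pm\varepsilon/2$ only, which is harmless for small $\varepsilon$ because both endpoint constraints are slack.
3. Nonnegativity holds for small $\varepsilon$ since $c>0$ on $P$.

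Both $c_\pm$ are therefore feasible for~\eqref{eq:P-c}. Now distinguish by the parity of $\ell$:

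- If $\ell$ is even, then $\bfo^\top h=0$. So $c_\pm\in\cC_x$ and $c=\tfrac12(c_++c_-)$, which contradicts $c$ being a vertex.
- If $\ell$ is odd, then $\bfo^\top h=1$, so $c_+$ is feasible with strictly larger objective value. This contradicts optimality of $c$.

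Either way no odd node exists, and $y$ is integer valued.

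The main point needing care is the pairing step: guaranteeing a second odd node reachable through edges of positive weight, which the component-wise handshake argument supplies, and keeping $P$ a simple path so that interior cancellation is exact.
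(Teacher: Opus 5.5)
Your proof is correct and follows the paper's argument: a handshake/parity count shows that half-integer nodes come in pairs within each component of $S_c$, and an alternating perturbation along a simple path joining two of them contradicts optimality when the path has odd length and contradicts extremality of $c$ when it has even length. The differences are cosmetic: you fold the paper's path-parity lemma into a single case split and perturb by $\varepsilon$ rather than $\pm 1$, and you rightly observe that the paper's assumption that the interior path nodes are integer nodes is not needed for the cancellation.
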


We establish below the proposition. 
A node $u_i\in S$ is said to be a {\it half-integer node} (resp. {\it integer node}) if $2y_i$ is an odd (resp. even) integer. 
We show below that there is no half-integer node, which will then establish item~1 of Theorem~\ref{thm:main}. The proof will be carried out by contradiction, i.e., we assume that there exist at least one half-integer node and derive a contradiction toward the end of the subsection. 

Let $S_c$ be the subgraph of $S$ induced by the support of~$c$. Specifically, we have that $V(S_c) = V(S)$ and 
\begin{equation*}\label{eq:ESc}
E(S_c) = \{(u_i,u_j)\in E(S) \mid c(u_i,u_j) > 0\}.
\end{equation*}
We have the following result:

\begin{lemma}\label{lem:evenhalfintnode}
Each connected component of $S_c$ contains an even number of half-integer nodes.
\end{lemma}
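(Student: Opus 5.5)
The plan is a parity (handshake-type) count carried out inside each connected component of $S_c$. The setting is as follows. By Proposition~\ref{prop:ytoc}, the vertex $c$ of $\cC_x$ is integer valued. Since $y = Zc$, we have
$$
2y_i \;=\; \sum_{(u_i,u_j)\in E(S),\, u_j\neq u_i} c(u_i,u_j) \;+\; 2\,c(u_i,u_i),
$$
where the last term appears only if $u_i$ carries a self-loop. This follows because the incidence vector of a non-loop edge contributes $\tfrac12$ to each endpoint, while that of a self-loop contributes $1$ to its node. Consequently, $u_i$ is a half-integer node exactly when the sum of $c$ over the non-loop edges incident to $u_i$ is odd. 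Only edges of positive weight contribute, so this sum can be taken over the edges of $S_c$.

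Fix a connected component $C$ of $S_c$ and sum $2y_i$ over all $u_i\in V(C)$. Every edge of $S_c$ has both endpoints in the same component. Hence each non-loop edge $(u_i,u_j)$ of $C$ is counted exactly twice, once at $u_i$ and once at $u_j$, and each self-loop is counted with coefficient $2$. This gives
$$
\sum_{u_i\in V(C)} 2y_i \;=\; 2\sum_{(u_i,u_j)\in E(C)} c(u_i,u_j),
$$
which is an even integer. Each integer node contributes an even summand, and each half-integer node contributes an odd one. The number of half-integer nodes in $C$ is therefore even, which proves Lemma~\ref{lem:evenhalfintnode}.

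No step here is really an obstacle. The only points requiring care are the correct bookkeeping of self-loops (coefficient $2$ rather than $1$) and the justification that $c$ is integer. The latter is exactly where Proposition~\ref{prop:ytoc} is needed, because the parity argument is meaningless for fractional $c$. Isolated nodes of $S_c$ have $y_i=0$ and so pose no issue.
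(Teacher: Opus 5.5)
Your proof is correct and is essentially the paper's argument: the paper sums $y_i$ over a component $S'$ to obtain $\sum_{(u_i,u_j)\in E(S')} c(u_i,u_j)$, an integer because $c$ is integer valued, and this is your identity divided by $2$. Your explicit handling of self-loops and your appeal to Proposition~\ref{prop:ytoc} for the integrality of $c$ simply spell out what the paper leaves implicit.
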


\begin{proof}
Let $S'$ be a connected component of $S_c$. It follows from the definition of $Z$ that 

\begin{equation}\label{eq:evenc}
\sum_{u_i\in S'} y_i
= \sum_{(u_i,u_j)\in E(S')} c(u_i,u_j)
\end{equation}
Since $c$ is integer-valued, the right hand side of~\eqref{eq:evenc} is an  integer, which implies that there exists an even number of half-integer nodes in $S'$. 
\end{proof}

By our hypothesis and by Lemma~\ref{lem:evenhalfintnode}, there exist at least $2$ distinct half-integer nodes $u_i$ and $u_j$, which belong to the same connected component of $S_c$. Now, let $p$ be a path from $u_i$ to $u_j$. We write $p$ explicitly as
\begin{equation}\label{eq:pathp}
p = u_{i_1}u_{i_2}\cdots u_{i_k}, \quad \mbox{with } u_{i_1} = u_i \mbox{ and  } u_{i_k} = u_j. 
\end{equation}
We can assume, without loss of generality, that all the nodes $u_{i_2},\ldots, u_{i_{k-1}}$ are integer nodes. 
We have 

\begin{lemma}\label{lem:patheven}
The length of $p$ is even.  
\end{lemma}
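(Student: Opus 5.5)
The plan is to argue by contradiction. Suppose the length $k-1$ of $p$ is odd. We will build a feasible point of~\eqref{eq:P-c} with a strictly larger objective than $c$, which contradicts $c\in\cC_x$. Only optimality of $c$ and integrality of $c$ (from Proposition~\ref{prop:ytoc}) are needed, not the fact that $c$ is a vertex.

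\emph{Step 1: collect the relevant facts.}
\begin{itemize}
\item Since $p$ is a path in $S_c$, its edges $f_1=(u_{i_1},u_{i_2}),\ldots,f_{k-1}=(u_{i_{k-1}},u_{i_k})$ are distinct and none is a self-loop.
\item Each $f_\ell$ satisfies $c(f_\ell)>0$. Because $c$ is integer valued, this gives $c(f_\ell)\ge 1$.
\item The endpoints $u_{i_1}$ and $u_{i_k}$ are half-integer nodes and $x$ is integer valued. Together with $y\le x$, this gives slack $x_{i_1}-y_{i_1}\ge \tfrac12$ and $x_{i_k}-y_{i_k}\ge\tfrac12$.
\end{itemize}

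\emph{Step 2: define the alternating perturbation.} Set $c'(f_\ell):=c(f_\ell)+(-1)^{\ell+1}$ for $\ell=1,\ldots,k-1$, and leave $c$ unchanged on all other edges. Because $k-1$ is odd, both the first edge $f_1$ and the last edge $f_{k-1}$ receive $+1$. There are $k/2$ edges with $+1$ and $(k-2)/2$ with $-1$, so $\bfo^\top c'=\bfo^\top c+1$.

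\emph{Step 3: check feasibility.}
\begin{itemize}
\item Nonnegativity holds since every decreased edge had $c\ge1$.
\item For $y'=Zc'$, each interior node $u_{i_\ell}$ with $2\le \ell\le k-1$ is incident to exactly two path edges, one with $+1$ and one with $-1$. Each edge contributes $\tfrac12$ of its weight to each of its endpoints via $z_j$, so $y'_{i_\ell}=y_{i_\ell}$.
\item The endpoints are incident to exactly one path edge each, which carries $+1$. Hence $y'_{i_1}=y_{i_1}+\tfrac12\le x_{i_1}$, and likewise at $u_{i_k}$.
\item All other entries of $y$ are unchanged.
\end{itemize}
Thus $Zc'\le x$ and $c'\ge0$, while $\bfo^\top c'>\bfo^\top c$. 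This contradicts optimality of $c$, so the length of $p$ must be even.

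The main point needing care is Step 3's bookkeeping. It uses the simplicity of $p$: there are no repeated nodes or edges, and no loop lies on $p$. Simplicity ensures each interior node sees exactly one $+$ edge and one $-$ edge. It also ensures the endpoints are distinct, so the two $+\tfrac12$ increments land on different coordinates, each of which has slack at least $\tfrac12$. The interior nodes being integer nodes (the without-loss-of-generality assumption) is not needed for this parity argument itself; it will be used later.
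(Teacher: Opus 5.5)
Your proof is correct and follows essentially the same argument as the paper. Assuming $k-1$ is odd, you perturb $c$ by alternating $\pm 1$ along $p$ (your $(-1)^{\ell+1}$ coincides with the paper's $(-1)^{\ell-1}$), check that interior nodes are unchanged while each endpoint gains $\tfrac12$ within the slack forced by $y$ being half-integer and $x$ integer there, and conclude that the objective strictly increases, contradicting optimality. The only difference is that you state the contradiction as $\bfo^\top c' = \bfo^\top c + 1$ in~\eqref{eq:P-c} rather than as $\bfo^\top y' = \bfo^\top y + 1$ in~\eqref{eq:LP-y}, which is equivalent since $\bfo^\top Z = \bfo^\top$.
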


\begin{proof}
We assume to the contrary that the length of $p$ is odd (so $k$ is even).  
Since $c$ is integer valued and since $p$ is a path of $S_c$, $c(u_{i_\ell}, u_{i_{\ell+1}})\geq 1$ for any $\ell = 1,\ldots, k-1$.   
Define an element $c'\in \R^{m}_{\geq 0}$ as follows: 
For each edge $(u_{i_\ell}, u_{i_{\ell + 1}})\in E(p)$, let
\begin{equation}\label{eq:defc'}
c'(u_{i_\ell}, u_{i_{\ell + 1}}):= c(u_{i_\ell}, u_{i_{\ell + 1}}) + (-1)^{\ell-1}.
\end{equation}
For any other edge $(u_{i'},u_{j'})\notin E(p)$, we let $c'(u_{i'}, u_{j'}):= c(u_{i'},u_{j'})$. 
Now, let $y':= Z c'$. We claim that $y'\leq x$. To wit, 
we consider the following four cases: 
\begin{description}
 \item[{\it Case 1: $u_{i'}\notin V(p)$.}] In this case, $u_{i'}$ is not incident to any edge in the path $p$. Thus,  $y'_{i'} = y_{i'}$. 

\item[{\it Case 2: $u_{i'} = u_{i_\ell}$ for $\ell = 2,\ldots, k-1$.}] Since both $u_{i_{\ell- 1}}$ and $u_{i_{\ell + 1}}$ are neighbors of $u_{i_\ell}$, it follows from~\eqref{eq:defc'} that $y'_{i'} = y_{i'}$.  

\item[{\it Case 3: $u_{i'} = u_i$.}] Note that $(u_{i_1}, u_{i_2})$ is the only edge of $p$ incident to $u_i$. By\eqref{eq:defc'}, $c'(u_{i_1},u_{i_2}) = c(u_{i_1}, u_{i_2}) + 1$. It follows that  
$y'_i = y_i + \frac{1}{2}$.
Since $y_i$ is a half-integer and since $x_i$ is an integer, $y'_i\leq x_i$. 

\item[{\it Case 4: $u_{i'} = u_j$.}] Similarly, $(u_{i_{k-1}}, u_{i_k})$ is the only edge of $p$ incident to $u_{j}$. Since $k$ is even, it follows from~\eqref{eq:defc'} that
$c'(u_{i_{k-1}},u_{i_k})  = c(u_{i_{k-1}}, u_{i_k}) + 1$.
Thus, $y'_j = y_j + \frac{1}{2}\leq x_j$.  
\end{description}
We have thus {established} the claim. But then, $\bfo^\top y' = \bfo^\top y + 1$, contradicting the fact that $y$ is a solution to~\eqref{eq:LP-y}. 
\end{proof}

With the above lemma, we now prove Proposition~\ref{prop:integrality}.



\begin{proof}[Proof of Proposition~\ref{prop:integrality}]
Let $p$ be given as in~\eqref{eq:pathp}. 
Similar to what has been done in~\eqref{eq:defc'}, 
we introduce $c', c''\in \N_0^m$ as follows: For each edge $(u_{i'}, u_{j'})\in E(S) - E(p)$, we set $c'(u_{i'},u_{j'}) := c(u_{i'},u_{j'})$ and $c''(u_{i'},u_{j'}) := c(u_{i'},u_{j'})$. For each edge $(u_{i_\ell}, u_{i_{\ell + 1}})\in E(p)$, let 
\[
\begin{aligned}
c'(u_{i_\ell},u_{i_{\ell+1}}) & :=c(u_{i_\ell},u_{i_{\ell+1}})+(-1)^{\ell-1},\\
c''(u_{i_\ell},u_{i_{\ell+1}}) & :=c(u_{i_\ell},u_{i_{\ell+1}})-(-1)^{\ell-1}.
\end{aligned}
\] 
Note that $c(u_{i_\ell}, u_{i_{\ell + 1}}) \geq 1$ for all $\ell = 1,\dots, k-1$, so $c',c''$ indeed belong to $\N_0^m$. 
It is clear that 
$\bfo^\top c' = \bfo^\top c'' = \bfo^\top c$.  
We further let 
$y':=Zc'$ and $y'':=Zc''$. 
Using the same arguments in the proof of Lemma~\ref{lem:patheven}, we have that if $u_{k}\notin\{u_i, u_j\}$, then $y'_k = y''_k = y_k$. 
For node $u_i$ and $u_j$, we have 
\[
\begin{aligned}
y'_i=y_i+\tfrac12,\qquad & y'_j= y_j + \tfrac12(-1)^{k-2} =  y_j-\tfrac12,\\
y''_i=y_i-\tfrac12, \qquad & y''_j=y_j - \tfrac12(-1)^{k-2} = y_j + \tfrac12,
\end{aligned}
\]
where the fact $(-1)^{k-2} = -1$ follows from Lemma~\ref{lem:patheven} (note that the length of $p$ is $(k-1)$).
Since $u_i$ and $u_j$ are half-integer nodes, $y'\leq x$ and $y'' \leq x$. 
The above arguments imply that both $c'$ and $c''$ are solutions to~\eqref{eq:P-c} and, moreover, $c = \frac{1}{2}(c' + c'')$, contradicting the hypothesis that $c$ is a vertex of $\cC_x$. We thus conclude that the path $p$ does not exist, so $y = Zc$ is integer valued.  
\end{proof}

\subsection{Proof of Item~2 of Theorem~\ref{thm:main}}
In this subsection, we show that an integer solution~$y$ to~\eqref{eq:LP-y} can be found in $O(q^3)$.  
This is done by first constructing the bipartite graph $B$ introduced in Subsection~\ref{sec:half-int}, next obtaining an integer solution $d$ to~\eqref{eq:M-d}, and then setting $c:=\phi(d)$ where $\phi$ is introduced in the proof of Lemma~\ref{lem:relcandd}, and setting $y := Zc$. 
If $y$ has half-integer entries (i.e., $y_i - 1/2\in \N_0$), then the arguments in the proofs of Lemmas~\ref{lem:evenhalfintnode} and~\ref{lem:patheven} show that in $S_c$ one can find a (shortest) path of even length joining two half-integer nodes in the same connected component. 
We then modify $c$ along this path in the same alternating way as in the proof of Proposition~\ref{prop:integrality}, and update $y:=Zc$. 
This reduces the number of half-integer entries of $y$ by $2$. 
Iterating at most $\lfloor q/2 \rfloor$ times yields an integer solution~$y$ to~\eqref{eq:LP-y}.
We elaborate below on the time complexity:
\begin{description}[leftmargin=1.5em]
\item[\it Step 1.]  
By its definition, $B$ has $2q$ nodes and less than $2m$ edges and $\hat x$ has $2q$ entries, so construction of these two objects can be done in $O(q + m)$. 
The linear program~\eqref{eq:M-d} is an unweighted bipartite $b$-matching problem, which can be reformulated, in $O(q+m)$, as a maximum flow problem~\cite[Sec.~26.3]{CLRS2009}, 
whose solution can be obtained in $O(q^3)$. 

\item[\it Step 2.] It follows from the definitions of $\phi$ and $Z$ that the computation $\phi(d)$ can be done in $O(m)$, and computation of $y = Zc$ can be done in $O(m)$.  


\item[\it Step 3.] If $y$ has a half-integer node $u_i$, then there exists another half-integer node $u_j$, linked by a path of even length. Such a node  $u_j$ can be obtained in $O(q + m)$. Update $c$ and $y$ as described in the proof of Proposition~\ref{prop:integrality}. Iterating at most $\lfloor q/2 \rfloor$ times yields an integer solution~$y$. 
\end{description}
We summarize the above steps in Algorithm~\ref{alg:item1}.

\begin{algorithm}[ht]
  \caption{}
  \label{alg:item1}
  \textbf{Input:}
  A graph $S$ on $q$ nodes and $m$ edges (possibly with self-loops),
  and an integer vector $x\in\N^q$.\\[2pt]
  \textbf{Output:}
  An integer solution $y$ to~\eqref{eq:LP-y}.

  \begin{enumerate}[leftmargin=1.5em,label=\arabic*.]
    \item[\it 1.] Construct $B$ and $\hat x:=(x;x)$; \\ Solve the linear program~\eqref{eq:M-d} and obtain an integer solution $d$. 
    \hfill{$O(q^3)$}
    \item[\it 2.] Set $c:= \phi(d)$ and $y:=Z c$. 
           \hfill{$O(m)$}
    \item[\it 3.] Iterating to eliminate half-integer nodes. \hfill{$O(q^2+qm)$}
  \end{enumerate}
\end{algorithm}



\section{Hamiltonicity of Complete $S$-partite Graphs}\label{sec:KyHam}
In this section, we establish the following theorem:

\begin{theorem}\label{thm:hc}
    Let $S$ be an undirected, connected graph, possibly with self-loops. 
    Let $c\in \N^m$ and $y\in \N^q$ be such that $y = Zc$ and  $\bfo^\top c  = \bfo^\top y \geq 3$. 
    Then, the complete $S$-partite graph $K_y$ has a Hamilton cycle.   
\end{theorem}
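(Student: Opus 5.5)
We prove Theorem~\ref{thm:hc} by constructing a closed walk in $S$ that realizes $c$, and then lifting it to $K_y$. Since $y = Zc$ with $c \in \N^m$, each edge $(u_i,u_j)$ of $S$ is assigned a positive integer multiplicity $c(u_i,u_j)$, and \emph{every} edge of $S$ has positive multiplicity. Node $u_i$ then has $y_i$ copies in $K_y$. Moreover, $2y_i$ equals the degree of $u_i$ in the multigraph $S^c$ obtained by replacing each edge with $c(u_i,u_j)$ parallel copies, where a self-loop with multiplicity $c(u_i,u_i)$ contributes $2c(u_i,u_i)$ to the degree.

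\textbf{Step 1: an Euler tour.} The multigraph $S^c$ is connected, because $S$ is connected and all multiplicities are positive. All its degrees $2y_i$ are even. Hence $S^c$ has an Euler tour $W = w_0 w_1 \cdots w_{N-1} w_0$ with $N = \bfo^\top c = \bfo^\top y \ge 3$ edges. Each node $u_i$ appears exactly $y_i$ times among $w_0,\ldots,w_{N-1}$, since each visit accounts for $2$ of its $2y_i$ edge-ends; a loop traversal $w_t = w_{t+1} = u_i$ counts as one visit per position, consistently with this count.

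\textbf{Step 2: lifting.} Assign to the $y_i$ occurrences of $u_i$ in $W$ the $y_i$ distinct nodes of $\pi^{-1}(u_i)$, bijectively. This gives a cyclic sequence $v_0 v_1 \cdots v_{N-1}$ of distinct nodes covering all $N = |V(K_y)|$ nodes. Consecutive nodes $v_t, v_{t+1}$ satisfy $(\pi(v_t),\pi(v_{t+1})) = (w_t,w_{t+1}) \in E(S)$, so they are adjacent in $K_y$ by completeness. A self-loop step lifts to an edge between two \emph{distinct} copies of $u_i$, which is allowed since $(u_i,u_i)\in E(S)$. Thus $v_0 v_1 \cdots v_{N-1} v_0$ is a closed walk visiting every node exactly once.

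\textbf{Step 3: checking it is a cycle.} Because $N \ge 3$ and the nodes $v_t$ are distinct, the edges $v_tv_{t+1}$ are pairwise distinct. It follows that the closed walk is a genuine Hamilton cycle. The hypothesis $N\ge 3$ is used here to exclude the degenerate cases $N\le 2$, where the "cycle" would repeat an edge.

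\textbf{Main obstacle.} The only delicate point is the bookkeeping of the correspondence "occurrences of $u_i$ in the Euler tour $=y_i$", in particular for self-loops. Each loop traversal inserts one additional occurrence of $u_i$ and contributes $c(u_i,u_i)$ to $y_i$ through $z_j=e_i$, so the counts agree. I would verify this carefully using the degree identity $\deg_{S^c}(u_i) = 2y_i$, which follows from $y_i = c(u_i,u_i) + \frac12\sum_{j\ne i} c(u_i,u_j)$.
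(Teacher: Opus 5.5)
Your proposal is correct and follows essentially the same route as the paper: form the multigraph with $c(u_i,u_j)$ parallel edges (the paper's $M$), take an Euler circuit, use $\deg(u_i)=2y_i$ to get exactly $y_i$ occurrences of each $u_i$, lift these occurrences bijectively onto $V(K_y)$, and use $\bfo^\top y\ge 3$ to conclude the closed walk is a Hamilton cycle. Your explicit checks that self-loop traversals lift to edges between distinct copies and that the lifted edges are pairwise distinct when $N\ge 3$ only spell out details the paper leaves implicit.
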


Hamiltonicity of $S$-partite graphs, for $S$ a directed graph, has been addressed in~\cite{chen2025hamiltonicity}. We develop below a new approach to establish Theorem~\ref{thm:hc}. 

Consider the auxiliary undirected pseudograph $M$ whose node set $V(M)$ is the same as that of $S$, and whose edge set $E(M)$ is defined as follows: For each  edge $(u_i,u_j)\in E(S)$ (possibly with $u_i= u_j$), we place  $c(u_i,u_j)$ multiple edges between $u_i$ and $u_j$ (self-loops if $u_i = u_j$). 
It follows directly  from the construction that
\begin{equation}\label{eq:degM}
\deg_M(u_i)=\sum_{j\ne i} c(u_i,u_j)+2c(u_i,u_i)=2y_i > 0,
\end{equation}
where $c(u_i,u_j) = 0$ if $(u_i,u_j)\notin E(S)$. In particular, every node of $M$ has an even degree.  

Since $S$ is connected and since $c(u_i,u_j) > 0$ for all $(u_i,u_j)\in E(S)$, we have that $M$ is connected. 
Then, by~\eqref{eq:degM} and Hierholzer's theorem~\cite{Hierholzer1873},  there exists an Euler circuit of $M$, i.e., a closed walk that traverses each edge of $M$ exactly once. We write this circuit explicitly as
\begin{equation}\label{eq:eulercircuit}
  \cE:=
  u_{i_0} e_1  u_{i_1} e_2, \dots, e_L u_{i_L}, 
\end{equation}
where the nodes and the edges are alternating with the edges $e_j$ linking the nodes $u_{i_{j-1}}$ and $u_{i_j}$, $u_{i_L} = u_{i_0}$, and $L:=|E(M)|$ is the size of $M$.

We illustrate $M$ and $\mathcal{E}$ in the following example: 

\begin{example}\label{exmp:1}\normalfont
Let $S$ be the $3$-cycle, with $V(S) = \{u_1,u_2,u_3\}$. Let $c(u_1,u_2)=4$, $c(u_2,u_3)=2$, and $c(u_3,u_1)=2$. Then $y=Zc=(3,3,2)$. Figure~\ref{fig:M-Ky-hamilton}(a) shows the corresponding Eulerian pseudograph $M$ together with the chosen Euler circuit 
$\mathcal{E}=u_1 e_1 u_2 e_2 u_3 e_3 u_1 e_4 u_2 e_5 u_3 e_6 u_1 e_7 u_2 e_8 u_1$.
\end{example}

\begin{figure}[ht]
  \centering
  \begin{tikzpicture}[
      line cap=round, line join=round,
      dot/.style={circle,inner sep=0pt,minimum size=4pt,fill=black},
      partbox/.style={draw,rounded corners,inner sep=2pt},
      edgegap/.style={shorten >=2pt,shorten <=2pt},
      euleredge/.style={thick,blue,edgegap},
      kyedge/.style={gray!40,line width=0.4pt,edgegap},
      cycleedge/.style={thick,blue,edgegap},
      edgelabel/.style={font=\scriptsize,fill=white,inner sep=1pt},
      every label/.style={font=\small}
    ]

    \begin{scope}[scale=1.05]
      \coordinate (m1) at (0,1.4);
      \coordinate (m2) at (-2.0,-0.6);
      \coordinate (m3) at (2.0,-0.6);

      \node[dot,label=above:$u_1$]       (u1) at (m1) {};
      \node[dot,label=below left:$u_2$]  (u2) at (m2) {};
      \node[dot,label=below right:$u_3$] (u3) at (m3) {};

      \draw[euleredge] (u1) to[bend left=44]
        node[edgelabel,pos=.30] {$e_7$} (u2);
      \draw[euleredge] (u1) to[bend right=6]
        node[edgelabel,pos=.40] {$e_4$} (u2);
      \draw[euleredge] (u1) to[bend right=28]
        node[edgelabel,pos=.60] {$e_1$} (u2);
      \draw[euleredge] (u2) to[bend right=16]
        node[edgelabel,pos=.40] {$e_8$} (u1);

      \draw[euleredge] (u2) to[bend left=18]
        node[edgelabel,pos=.55] {$e_2$} (u3);
      \draw[euleredge] (u2) to[bend right=18]
        node[edgelabel,pos=.55] {$e_5$} (u3);

      \draw[euleredge] (u3) to[bend left=8]
        node[edgelabel,pos=.55] {$e_6$} (u1);
      \draw[euleredge] (u3) to[bend right=20]
        node[edgelabel,pos=.52] {$e_3$} (u1);

      \node[font=\small] at (0,-1.6)
        {(a) };
    \end{scope}

    \begin{scope}[xshift=6.25cm,scale=1.0, every label/.style={font=\scriptsize}]

      \node[dot,label={[xshift=-3pt]above:$\tau(0)$}]  (v11) at (-0.3,1.7) {};
      \node[dot,label={[xshift=7pt]above:$\tau(3)$}]       (v12) at (0.3,1.15) {};
      \node[dot,label={[xshift=3pt]above right:$\tau(6)$}]        (v13) at (0.9,0.6)  {};

      \node[dot,label=left:$\tau(1)$]        (v21) at (-2.0,0.3)  {};
      \node[dot,label={[xshift=-10pt]left:$\tau(4)$}]        (v22) at (-1.65,-0.3) {};
      \node[dot,label={[xshift=-20pt]left:$\tau(7)$}]        (v23) at (-1.3,-0.9) {};

      \node[dot,label=right:$\tau(2)$]       (v31) at (2.4,0)  {};
      \node[dot,label=right:$\tau(5)$]       (v32) at (2.4,-0.9) {};

      \node[partbox,fit=(v11)(v12)(v13)] (Box1) {};
      \node[partbox,fit=(v21)(v22)(v23)] (Box2) {};
      \node[partbox,fit=(v31)(v32)]      (Box3) {};
      
      \node[font=\scriptsize,anchor=south,xshift=10pt,yshift=1pt] at (Box1.north)
        {$V_1$};
      \node[font=\scriptsize,anchor=north,yshift=-1pt] at (Box2.south)
        {$V_2$};
      \node[font=\scriptsize,anchor=north,yshift=-1pt] at (Box3.south)
        {$V_3$};

      \foreach \a in {11,12,13}{
        \foreach \b in {21,22,23}{
          \draw[kyedge] (v\a) -- (v\b);
        }
      }

      \foreach \a in {21,22,23}{
        \foreach \b in {31,32}{
          \draw[kyedge] (v\a) -- (v\b);
        }
      }

      \draw[kyedge] (v11) to[bend left=24] (v31);
      \draw[kyedge] (v11) to[bend left=32] (v32);
      \draw[kyedge] (v12) to[bend left=20] (v31);
      \draw[kyedge] (v12) to[bend left=28] (v32);
      \draw[kyedge] (v13) to[bend left=16] (v31);
      \draw[kyedge] (v13) to[bend left=24] (v32);

      \draw[cycleedge] (v11) -- (v21);
      \draw[cycleedge] (v21) -- (v31);
      \draw[cycleedge] (v31) -- (v12);
      \draw[cycleedge] (v12) -- (v22);
      \draw[cycleedge] (v22) -- (v32);
      \draw[cycleedge] (v32) -- (v13);
      \draw[cycleedge] (v13) -- (v23);
      \draw[cycleedge] (v23) -- (v11);

      \node[font=\small] at (0,-1.6)
        {(b)};
    \end{scope}

  \end{tikzpicture}
  
  \par\vspace{-0.85\baselineskip}
  \caption{(a) The auxiliary pseudograph $M$ for Example~\ref{exmp:1}, with blue edges labeled by their order in $\mathcal{E}$. (b) The corresponding complete $S$-partite graph $K_y$, with the lifted Hamilton cycle $H$ shown in blue.}
  \label{fig:M-Ky-hamilton}
\end{figure}
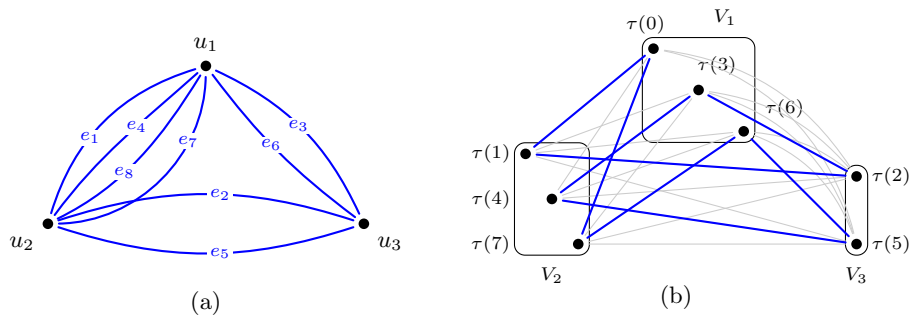

Next, we make the following observation: 

\begin{lemma}\label{lem:closed-walk-lift}
Let $u_{i_0},\ldots, u_{i_{L-1}}$ be the sequence of nodes in the Euler circuit~\eqref{eq:eulercircuit}. 
Then, for any sequence of nodes $v_{i_0},\ldots, v_{i_{L-1}}$ such that $v_{i_j}\in \pi^{-1}(u_{i_j})$  and $v_{i_j} \neq v_{i_{j + 1}}$ for all $j = 0,\ldots, L-1$ (with $v_{i_L}$ identified with $v_{i_0}$),  
$v_{i_0}v_{i_1}\cdots v_{i_{L-1}}v_{i_0}$ is a closed walk of $K_y$.
\end{lemma}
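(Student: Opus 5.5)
It suffices to check that each consecutive pair in the lifted sequence is an edge of $K_y$. A closed walk in a simple graph is just a cyclic sequence of vertices in which consecutive vertices are adjacent. So the plan is to fix $j\in\{0,\ldots,L-1\}$, with indices taken mod $L$ so that $v_{i_L}=v_{i_0}$, and show that $(v_{i_j},v_{i_{j+1}})\in E(K_y)$.

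The first step uses the Euler circuit~\eqref{eq:eulercircuit}. The edge $e_{j+1}$ of $M$ joins $u_{i_j}$ and $u_{i_{j+1}}$. By the construction of $M$, an edge of $M$ between two nodes exists only when $c(u_{i_j},u_{i_{j+1}})\ge 1$, and $c$ is supported on $E(S)$. Hence $(u_{i_j},u_{i_{j+1}})\in E(S)$; this includes the case $u_{i_j}=u_{i_{j+1}}$, where $e_{j+1}$ is a self-loop of $M$ and therefore comes from a self-loop of $S$.

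The second step uses Definition~\ref{def:spartite} of the complete $S$-partite graph $K_y$. Two vertices $v,w$ of $K_y$ are adjacent if and only if $(\pi(v),\pi(w))\in E(S)$. We have $\pi(v_{i_j})=u_{i_j}$ and $\pi(v_{i_{j+1}})=u_{i_{j+1}}$, and these form an edge of $S$ by the first step. Therefore $v_{i_j}$ and $v_{i_{j+1}}$ are adjacent in $K_y$.

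The only subtle point, and the place the hypothesis $v_{i_j}\neq v_{i_{j+1}}$ is needed, is the self-loop case. When $u_{i_j}=u_{i_{j+1}}$, both lifts lie in the same fiber $\pi^{-1}(u_{i_j})$. This fiber is a clique in $K_y$, since $S$ has a loop there, but $K_y$ is a simple graph with no loops. So adjacency requires the two lifted vertices to be distinct, which is exactly what the hypothesis guarantees. In the case $u_{i_j}\neq u_{i_{j+1}}$, distinctness is automatic because the fibers of $\pi$ are disjoint.

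Putting the two steps together for every $j$, the cyclic sequence $v_{i_0}v_{i_1}\cdots v_{i_{L-1}}v_{i_0}$ is a closed walk of $K_y$. I expect no real obstacle beyond stating the self-loop convention for $K_y$ carefully.
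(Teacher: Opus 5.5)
Your proposal is correct and follows essentially the same route as the paper: each edge $e_{j+1}$ of the Euler circuit exists in $M$ only because the corresponding coefficient of $c$ is positive, so $(u_{i_j},u_{i_{j+1}})\in E(S)$, and the definition of a complete $S$-partite graph then makes the lifts adjacent in $K_y$. Your explicit treatment of the self-loop case, where the hypothesis $v_{i_j}\neq v_{i_{j+1}}$ is actually used, is a detail the paper's proof leaves implicit.
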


\begin{proof}
We need to show that for each $j = 1,\ldots, L$, $v_{i_{j-1}}v_{i_{j}}$ is an edge of $K_y$ (where $v_{i_L}$ is identified with $v_{i_0}$). 
Since $K_y$ is complete $S$-partite and since $\pi(v_{i_j}) = u_{i_j}$ for all $j$, it suffices to show that $(u_{i_{j-1}}, u_{i_j})$ is an edge of $S$. But this holds because $e_{j}$ is an edge of $M$ linking $u_{i_{j-1}}$ and $u_{i_j}$. By construction of $M$, $e_j$ exists if and only if $c(u_i,u_j) > 0$.    
\end{proof}

With the preliminaries above, we establish Theorem~\ref{thm:hc}: 

\begin{proof}[Proof of Theorem~\ref{thm:hc}]
We will make a specific choice of the $v_{i_j}$'s in the statement of Lemma~\ref{lem:closed-walk-lift} to construct a desired Hamilton cycle of $K_y$. For each $p = 1,\ldots, q$, we let
$$\mathcal{I}_p:=\{j\in \{0,\ldots, L-1\} \mid u_{i_{j}} = u_p \}.$$
Since $\mathcal{E}$ is an Euler circuit, we have that 
$2|\mathcal{I}_p| = \deg_M(u_p)$. Then, by~\eqref{eq:degM}, 
$|\mathcal{I}_p| = y_p$. 
Thus there exists a bijection between $\mathcal{I}_p$ and $\pi^{-1}(u_p)$ for each $p = 1,\ldots, q$, which we denote by $\tau_p: \mathcal{I}_p \to \pi^{-1}(u_p)$. 
Since $\mathcal{I}_1,\ldots, \mathcal{I}_q$ form a partition of $\{0,\ldots, L-1\}$ and since $\pi^{-1}(u_1), \ldots, \pi^{-1}(u_q)$ form a partition of $V(K_y)$, we have the bijection $$\tau: \{0,\ldots, L-1\} \to V(K_y)$$ 
given by
$\tau(j):= \tau_p(j)$, where $p$ is such that $j\in \mathcal{I}_p$. Let 
$$
H:= \tau(0)\tau(1) \cdots \tau(L-1)\tau(0). 
$$
By construction and by Lemma~\ref{lem:closed-walk-lift}, $H$ is a closed walk of $K_y$ that visits every node exactly once. Finally, since $\bfo^\top c = \bfo^\top y \geq 3$, $K_y$ has at least $3$ nodes. We conclude that $H$ is a Hamilton cycle of $K_y$. 
\end{proof}

Figure~\ref{fig:M-Ky-hamilton}(b) illustrates how the Euler circuit in panel~(a) is lifted to the Hamilton cycle $H$ in $K_y$. For the example,
$\mathcal{I}_1=\{0,3,6\},\;\mathcal{I}_2=\{1,4,7\},\;\mathcal{I}_3=\{2,5\}.$
Accordingly, the three parts in panel~(b) contain the vertex sets $\{\tau(0),\tau(3),\tau(6)\}$, $\{\tau(1),\tau(4),\tau(7)\}$, and $\{\tau(2),\tau(5)\}$, respectively. Moreover, in this example, $e_1$ lifts to $(\tau(0),\tau(1))$, $e_2$ lifts to $(\tau(1),\tau(2))$, \ldots, and $e_8$ lifts to $(\tau(7),\tau(0))$. Hence the blue cycle is exactly $H=\tau(0)\tau(1)\cdots\tau(7)\tau(0).$

\section{Proof of Item~3 of Theorem~\ref{thm:main}}\label{sec:thm1item3}
We first note that if $K_y$ has a cycle cover, then $y\in \cX$ (similar arguments in~\cite{belabbas2021h} can be used to establish this fact). In particular, if $K_y$ is a largest subgraph of $K_x$, with $y\leq x$, which has a cycle cover, then the order of $K_y$ is necessarily bounded above by the maximal value of the linear program~\eqref{eq:LP-y}.




We show below that if $y$ is an integer solution to~\eqref{eq:LP-y}, then $K_y$ has a cycle cover with at most $q$ cycles.  
Let $c\in \N_0^m$ be such that $y = Zc$, 
and $S_c$ be the subgraph of $S$ induced by the support of $c$ (the definition is given in Subsection~\ref{sec:thm1item1}). A connected component $S'$ of $S_c$ is said to be {\it nontrivial} if it contains at least one edge. 
We denote by $\mathcal{S}$ the set of nontrivial connected components of $S_c$. 

For each $S'\in \mathcal{S}$ and for each edge $f = (u_i,u_j)\in E(S')$, we let $z'(f) := \frac{1}{2}(e_i + e_j) \in \R^{|V(S')|}$ be the corresponding incidence vector. We then define 
\begin{equation*}\label{eq:yS'}
y_{S'} := \sum_{f\in E(S')} c(f) z'(f) > 0. 
\end{equation*}
Let $K_{y_{S'}}$ be the complete $S'$-partite graph, viewed as a subgraph of $K_{y}$. We have the following result: 

\begin{lemma}\label{lem:no-small-components}
For any $S'\in \mathcal{S}$, $K_{y_{S'}}$ is connected and has at least three vertices. 
\end{lemma}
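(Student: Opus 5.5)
The plan is to use two facts: $y_{S'}$ is just the restriction of $y$ to $V(S')$, and $y$ is optimal for~\eqref{eq:LP-y} with $x_i\ge 3$. Here $y$ is an integer solution to~\eqref{eq:LP-y} and $c\in\N_0^m$ satisfies $y=Zc$, with every $x_i\ge 3$, as in the setting of Item~3.

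First I would record what $y_{S'}$ looks like. Every edge of $S_c$ incident to a node of $S'$ lies in $E(S')$, because $S'$ is a connected component. Hence for each $u_i\in V(S')$,
$$(y_{S'})_i=\sum_{f\in E(S'),\,f\ni u_i} c(f)\,z'(f)_i = y_i .$$
So $y_{S'}$ is the restriction of $y$ to $V(S')$, and it is integer valued. Each $u_i\in V(S')$ is incident to some edge $f$ with $c(f)\ge 1$, since $S'$ is nontrivial and connected. Therefore $y_i\ge \tfrac12$, and by integrality $y_i\ge 1$. Consequently every part $\pi^{-1}(u_i)$ of $K_{y_{S'}}$ is nonempty.

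\emph{Connectedness.} Suppose $S'$ is a single node $u_i$. Since $S'$ is nontrivial, it carries a self-loop, and $K_{y_{S'}}$ is the complete graph on $y_i\ge1$ vertices, which is connected. Otherwise $S'$ has at least two nodes. Take any two vertices $v\in\pi^{-1}(u_a)$ and $w\in\pi^{-1}(u_b)$. Choose a path $u_a=u_{j_0}u_{j_1}\cdots u_{j_r}$ in $S'$ with $r\ge1$. If $u_a=u_b$, use a closed walk through a neighbor instead. Pick vertices $v_{j_s}\in\pi^{-1}(u_{j_s})$ with $v_{j_0}=v$ and $v_{j_r}=w$; this is possible because the parts are nonempty. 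Consecutive picks lie in parts adjacent in $S'$, so by completeness they are adjacent in $K_{y_{S'}}$. Hence $v$ and $w$ are joined by a walk, and the graph is connected.

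\emph{At least three vertices.} This step carries the real content, and it uses optimality. The number of vertices is $\sum_{u_i\in V(S')} y_i=\sum_{f\in E(S')}c(f)$. Suppose for contradiction that this is at most $2$. Since each $y_i\ge1$, every $u_i\in V(S')$ has $y_i\le 2\le x_i-1$. Fix an edge $f=(u_i,u_j)\in E(S')$ and modify $c$ as follows:
\begin{itemize}
\item if $f$ is a self-loop, increase $c(f)$ by $1$, which raises $y_i$ by $1$;
\item otherwise increase $c(f)$ by $2$, which raises both $y_i$ and $y_j$ by $1$.
\end{itemize}
All other entries of $y$ are unchanged, and the new $y$ still satisfies $y\le x$ and lies in $\cX$. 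Its objective $\bfo^\top y$ is strictly larger, contradicting that $y$ solves~\eqref{eq:LP-y}. Hence $K_{y_{S'}}$ has at least three vertices.

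The only delicate point is the identity $(y_{S'})_i=y_i$, which supplies both the integrality and the bound $y_i\ge 1$ on each node of $S'$. Once that is in hand, both claims are routine.
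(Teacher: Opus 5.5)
Your proof is correct and follows essentially the paper's argument. Connectedness comes from $S'$ being connected with all parts nonempty, and the three-vertex bound comes from contradicting the optimality of $y$ by augmenting $c$ along a self-loop ($y+e_i$) or a non-loop edge ($y+e_i+e_j$), using $y_i\le 2<3\le x_i$. Handling both cases at once via an arbitrary edge of $S'$ is a slight streamlining that avoids the paper's unneeded observation that $(u_i,u_j)$ is the only positively weighted edge at $u_i$ and $u_j$.
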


\begin{proof}
The connectedness of $K_{y_{S'}}$ directly follows from the fact that $S'$ is connected and $y_{S'} > 0$. 
We show below that $K_{y_{S'}}$ has at least three nodes. 
Suppose to the contrary that $K_{y_{S'}}$ has at most two nodes; then, by the fact that every entry of $y_{S'}$ is a positive integer, we must have that $S'$ has at most two nodes. We consider two cases:

\xc{Case 1: $S'$ has only one single node $u_i$.} 
Because $S'$ is a nontrivial connected component of $S_c$, the node $u_i$ must have a self-loop and, moreover, 
$y_{S'} = y_i = c(u_i,u_i) \leq 2$.  
Now, let $y' := y + e_i$. 
We claim that $y'$ satisfies the constraint in the linear program~\eqref{eq:LP-y}, i.e., $y'\in \cX$  and $y'\leq x$. 
This holds because $y\in \cX$  with $y_i \leq 2 < 3 \leq x_i$ and because $e_i$ is a column vector of $Z$ corresponding to the self-loop on $u_i$. 
But then, $\bfo^\top y' = \bfo^\top y + 1$, which contradicts the fact $y$, being a solution to~\eqref{eq:LP-y}, maximizes the objective function.

\xc{Case 2: $S'$ has two distinct nodes $u_i$ and $u_j$.} In this case, $S'$ has an edge $(u_i, u_j)$ with $c(u_i,u_j) > 0$, and necessarily
\begin{equation}\label{eq:case2ys'}
y_{S'} = 
\begin{bmatrix}
y_i \\
y_j
\end{bmatrix} = 
\begin{bmatrix}
1 \\ 
1
\end{bmatrix}.
\end{equation} 
Furthermore, we must have that $(u_i, u_j)$ is the only edge incident to either $u_i$ or $u_j$ such that $c(u_i,u_j) > 0$. 
To see this, note that if $S$ has the self-loop $(u_i, u_i)$, then $c(u_i,u_i) = 0$ because otherwise, $y_i = c(u_i,u_i) + \frac{1}{2}c(u_i,u_j) > 1$ contradicting~\eqref{eq:case2ys'}. The same arguments apply for $c(u_j,u_j)$. But then, by setting
$y'':= y + e_i + e_j$, 
we can similarly conclude that $y''\in \cX$ and $y''\leq x$, thus contradicting the fact that $y$ is a solution to~\eqref{eq:LP-y}. 
\end{proof}
We now appeal to Theorem~\ref{thm:hc} to obtain a Hamilton cycle $H_{S'}$ of $K_{y_{S'}}$. 
Since the subgraphs in $\mathcal{S}$ are pairwise disjoint and since $\sum_{S'\in \mathcal{S}} \|y_{S'}\|_1 = \|y\|$, we have that $H:= \cup_{S'\in \mathcal{S}} H_{S'}$ is a cycle cover of $K_y$, which has $|\mathcal{S}|$ cycles with $|\mathcal{S}|\leq |V(S)| =  q$. \hfill{\qed} 

Figure~\ref{fig:Sc-M-Ky} illustrates the above construction on a small instance. In this example, the skeleton in panel~(a) has edge set $\{(u_1,u_1),(u_1,u_2),(u_2,u_3)\}$ and capacity vector $x=(3,3,3)$. An optimal coefficient vector satisfies $c(u_1,u_1)=3$ and $c(u_2,u_3)=6$, with all other coefficients equal to zero, so the support graph $S_c$ shown in panel~(b) has two nontrivial connected components. Panels~(c) and~(d) summarize the construction of Section~\ref{sec:KyHam} applied separately to these two components. The loop component at $u_1$ yields a $3$-cycle on $V_1$, while the component on $\{u_2,u_3\}$ yields a $6$-cycle on $V_2\cup V_3$, together they form the desired cycle cover $H\subseteq K_y$.

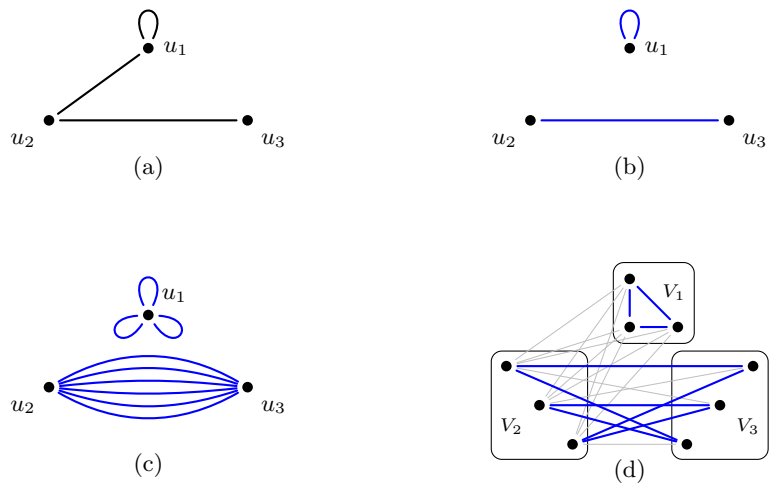
\begin{figure}[ht]
  \centering
  \begin{tikzpicture}[
      line cap=round, line join=round, >=Latex,
      dot/.style={circle,inner sep=0pt,minimum size=4pt,fill=black},
      partbox/.style={draw,rounded corners,inner sep=2pt},
      edgegap/.style={shorten >=2pt,shorten <=2pt},
      skedge/.style={thick,edgegap},
      skloop/.style={thick,edgegap},
      supportedge/.style={thick,blue,edgegap},
      supportloop/.style={thick,blue,edgegap},
      kyedge/.style={gray!50,line width=0.35pt,edgegap},
      cycleedge/.style={thick,blue,edgegap},
      every label/.style={font=\small}
    ]

    \begin{scope}[scale=.8]
      \coordinate (a1) at (0,1.05);
      \coordinate (a2) at (-1.65,-0.15);
      \coordinate (a3) at (1.65,-0.15);

      \node[dot,label=right:$u_1$]       (ux1) at (a1) {};
      \node[dot,label=below left:$u_2$]  (ux2) at (a2) {};
      \node[dot,label=below right:$u_3$] (ux3) at (a3) {};

      \draw[skedge] (ux1) -- (ux2);
      \draw[skedge] (ux2) -- (ux3);

      \draw[skloop] (ux1)
        .. controls +(-0.38,0.78) and +(0.38,0.78) .. (ux1);

      \node[font=\small] at (0,-0.95) {(a)};
    \end{scope}

    \begin{scope}[xshift=6.4cm,scale=.8]
      \coordinate (b1) at (0,1.05);
      \coordinate (b2) at (-1.65,-0.15);
      \coordinate (b3) at (1.65,-0.15);

      \node[dot,label=right:$u_1$]       (uc1) at (b1) {};
      \node[dot,label=below left:$u_2$]  (uc2) at (b2) {};
      \node[dot,label=below right:$u_3$] (uc3) at (b3) {};

      \draw[supportloop] (uc1)
        .. controls +(-0.38,0.78) and +(0.38,0.78) .. (uc1);
      \draw[supportedge] (uc2) -- (uc3);

      \node[font=\small] at (0,-0.95) {(b)};
    \end{scope}

    \begin{scope}[yshift=-3.55cm,scale=.8]
      \coordinate (m1) at (0,1.05);
      \coordinate (m2) at (-1.65,-0.15);
      \coordinate (m3) at (1.65,-0.15);

      \node[dot,label=above right:$u_1$]  (w1) at (m1) {};
      \node[dot,label=below left:$u_2$]   (w2) at (m2) {};
      \node[dot,label=below right:$u_3$]  (w3) at (m3) {};

      \draw[supportloop] (w1)
        .. controls +(-0.38,0.78) and +(0.38,0.78) .. (w1);
      \draw[supportloop,rotate around={120:(w1)}] (w1)
        .. controls +(-0.38,0.78) and +(0.38,0.78) .. (w1);
      \draw[supportloop,rotate around={240:(w1)}] (w1)
        .. controls +(-0.38,0.78) and +(0.38,0.78) .. (w1);

      \draw[supportedge,bend left=30]  (w2) to (w3);
      \draw[supportedge,bend left=18]  (w2) to (w3);
      \draw[supportedge,bend left=6]   (w2) to (w3);
      \draw[supportedge,bend right=6]  (w2) to (w3);
      \draw[supportedge,bend right=18] (w2) to (w3);
      \draw[supportedge,bend right=30] (w2) to (w3);

      \node[font=\small] at (0,-1.45) {(c)};
    \end{scope}

    \begin{scope}[xshift=6.4cm,yshift=-3.55cm,scale=.8]

      \node[dot] (v11) at (0.00,1.65) {};
      \node[dot] (v12) at (0.00,0.85) {};
      \node[dot] (v13) at (0.80,0.85) {};

      \node[dot] (v21) at (-2.05,0.20) {};
      \node[dot] (v22) at (-1.50,-0.45) {};
      \node[dot] (v23) at (-0.95,-1.10) {};

      \node[dot] (v31) at (0.95,-1.10) {};
      \node[dot] (v32) at (1.50,-0.45) {};
      \node[dot] (v33) at (2.05,0.20) {};

      \node[partbox,inner sep=4pt,fit=(v11)(v12)(v13)] (Box1) {};
      \node[partbox,inner sep=3.5pt,fit=(v21)(v22)(v23)] (Box2) {};
      \node[partbox,inner sep=3.5pt,fit=(v31)(v32)(v33)] (Box3) {};

      \node[font=\scriptsize,anchor=east,yshift=4pt] at (Box1.east) {$V_1$};
      \node[font=\scriptsize,anchor=west,yshift=-8pt] at (Box2.west) {$V_2$};
      \node[font=\scriptsize,anchor=east,yshift=-8pt] at (Box3.east) {$V_3$};

      \foreach \a in {11,12,13}{
        \foreach \b in {21,22,23}{
          \draw[kyedge] (v\a) -- (v\b);
        }
      }

      \foreach \a in {21,22,23}{
        \foreach \b in {31,32,33}{
          \draw[kyedge] (v\a) -- (v\b);
        }
      }

      \foreach \a/\b in {11/12,11/13,12/13}{
        \draw[kyedge] (v\a) -- (v\b);
      }

      \draw[cycleedge] (v11) -- (v12);
      \draw[cycleedge] (v12) -- (v13);
      \draw[cycleedge] (v13) -- (v11);

      \draw[cycleedge] (v21) -- (v31);
      \draw[cycleedge] (v31) -- (v22);
      \draw[cycleedge] (v22) -- (v32);
      \draw[cycleedge] (v32) -- (v23);
      \draw[cycleedge] (v23) -- (v33);
      \draw[cycleedge] (v33) -- (v21);

      \node[font=\small] at (0,-1.55) {(d)};
    \end{scope}

  \end{tikzpicture}

  \par\vspace{-0.85\baselineskip}
  \caption{An instance in which the support graph $S_c$ has two nontrivial connected components. 
  (a) The skeleton $S$.
  (b) The support graph $S_c$.
  (c) The associated pseudograph $M_c$.
  (d) The blow-up $K_y$ and the resulting cycle cover $H\subseteq K_y$.}
  \label{fig:Sc-M-Ky}
\end{figure}

\section{Numerical Study}\label{sec:ns}

We fix the skeleton graph $S$ shown in Figure~\ref{fig:sim-skeleton}, obtained from the $6$-cycle on $\{u_1,\dots,u_6\}$ by adding the chords $(u_1,u_3)$ and $(u_3,u_5)$ and a self-loop at $u_2$. For each vector $x\in\mathbb{N}^6$, let $K_x$ be the associated complete $S$-partite graph and let $n:=\|x\|_1$. Solving~\eqref{eq:LP-y} on $K_x$ by the max-flow algorithm developed above yields an integer solution $y^*$. We then set $n^*=\|y^*\|_1$.

\begin{figure}[ht]
  \centering
  \begin{tikzpicture}[
      line cap=round, line join=round, >=Latex,
      dot/.style={circle,inner sep=0pt,minimum size=4pt,fill=black},
      edgegap/.style={shorten >=2pt,shorten <=2pt},
      skedge/.style={draw=black!55,line width=1.0pt,edgegap},
      skloop/.style={draw=black!55,line width=1.0pt,edgegap},
      every label/.style={font=\small}
    ]

    \begin{scope}[scale=1]
      \coordinate (a1) at ( 2.05,  0.00);
      \coordinate (a2) at ( 0.95,  0.62);
      \coordinate (a3) at (-0.40,  0.62);
      \coordinate (a4) at (-1.55,  0.00);
      \coordinate (a5) at (-0.40, -0.62);
      \coordinate (a6) at ( 0.95, -0.62);

      \node[dot,label=right:$u_1$]       (u1) at (a1) {};
      \node[dot,label=above right:$u_2$] (u2) at (a2) {};
      \node[dot,label=above left:$u_3$]  (u3) at (a3) {};
      \node[dot,label=left:$u_4$]        (u4) at (a4) {};
      \node[dot,label=below left:$u_5$]  (u5) at (a5) {};
      \node[dot,label=below right:$u_6$] (u6) at (a6) {};

      \draw[skedge] (u1) -- (u2);
      \draw[skedge] (u2) -- (u3);
      \draw[skedge] (u3) -- (u4);
      \draw[skedge] (u4) -- (u5);
      \draw[skedge] (u5) -- (u6);
      \draw[skedge] (u6) -- (u1);
      \draw[skedge] (u1) -- (u3);
      \draw[skedge] (u3) -- (u5);

      \draw[skloop] (u2)
       .. controls +(-0.24,0.72) and +(0.24,0.72) .. (u2);
    \end{scope}
  \end{tikzpicture}
  \caption{The skeleton graph $S$ considered in the numerical study.}
  \label{fig:sim-skeleton}
\end{figure}
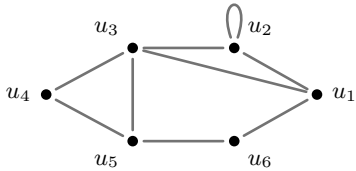

For each chosen $x$, we sample a random $S$-partite graph $G$ by retaining each admissible edge of $K_x$ independently with probability $p$. Equivalently, $G$ is drawn from a stochastic block model~\cite{holland1983stochastic} with community sizes $x_i$ and edge-probability matrix given by $p$ on the support of $S$ and $0$ elsewhere. For each sampled graph $G$, we define
\[
n(G):=\max\{|V(H)| \mid H\subseteq G \text{ is $2$-regular}\},
\]
that is, the maximum value of~\eqref{eq:mainproblem} with $K_x$ replaced by $G$.

To compute $n(G)$, we first repeatedly delete all vertices of degree less than $2$ until no such vertex remains (we can do so because none of these vertices can belong to a $2$-regular subgraph). We then decompose the resulting graph into connected components and, for each component $C$, solve the following integer linear program. The maximum values of all connected components sum to $n(G)$:
\begin{equation*}\label{eq:sim-ilp}
\begin{aligned}
\max\;& \sum_{v\in V(C)} s_v\\
\text{s.t.}\;& \sum_{e\in \delta_C(v)} h_e = 2s_v,\qquad &&\forall v\in V(C),\\
& h_e \le s_u,\qquad h_e \le s_v,\qquad &&\forall e=(u,v)\in E(C),\\
& s_v\in\{0,1\},\qquad &&\forall v\in V(C),\\
& h_e\in\{0,1\},\qquad &&\forall e\in E(C),
\end{aligned}
\end{equation*}
where $\delta_C(v)$ denotes the set of edges of $C$ incident to $v$.

We sample $N$ independent graphs $G^{(1)},\dots,G^{(N)}$,  
and plot the empirical probability mass function (PMF):
\[
\widehat{\mathbb{P}}(n(G)=t)
:=\frac{1}{N}\sum_{r=1}^N \mathds{1}_{\{n(G^{(r)})=t\}},
\quad t=0,1,\dots,n^*,
\]
where $\mathds{1}_{A}$ is the indicator function. 
We are particularly interested in the value 
$$\hat p^* := \widehat{\mathbb{P}}(n(G)=n^*).$$

We start with the numerical study for a relatively small $n$. 
Specifically, we set $x:=(24,7,4,11,6,4)$ and, correspondingly, $n=56$.
We consider different values of $p$: 
\[
p\in\left\{n^{-0.5},\,n^{-0.4},\,\tfrac{\log n}{n},\,\tfrac{4\log n}{n},\,\tfrac{6\log n}{n},\,0.6\right\},
\]
with $\log n/n \approx 0.072$, $n^{-0.5}\approx 0.134$, and $n^{-0.4}\approx 0.200$. 
For each $p$, we generate $N = 10^5$ samples. The corresponding empirical PMFs are given in 
Figure~\ref{fig:sim1}. 
Panel~(a) plots the entire PMFs, while panel~(b) zooms in near $n^*$. The results show that $\hat p^*$ is negligible for $p=\log n/n$ and $p=n^{-0.5}$, small for $p=n^{-0.4}$, substantial for $p=4\log n/n$, dominant for $p=6\log n/n$, and nearly $1$ for $p=0.6$.

\input{fig5}

We next consider a mid-size graph, with $x=(120,35,22,55,30,18)$ and $n=280$, and let 
\[
p\in\left\{n^{-0.4},\,\tfrac{4\log n}{n},\,\tfrac{6\log n}{n},\,0.6\right\},
\]
with $\log n/n \approx 0.020$ and $n^{-0.4} \approx 0.105$. For each $p$, we again generate $N = 10^5$ samples. The corresponding empirical PMFs are given in
Figure~\ref{fig:sim2}.  
Since the PMFs are concentrated near $n^*$, the plot only shows the relevant region.

\input{fig6}

Finally, we plot in Figure~\ref{fig:sim3} how $\hat p^*$ varies as $n$ increases. We consider vectors $kx$, for
$x=(120,35,22,55,30,18)$ and for $k=1,\dots,5$, with
$n=280k$.  
For each $k$, we let 
\[
p\in\left\{\tfrac{4\log n}{n},\,\tfrac{6\log n}{n},\,n^{-0.4}\right\}.
\]
For each $(k,p)$, we generate $N=10^4$ samples. 
We observe that $\hat p^*$ decreases for $p=4\log n/n$, stays around $0.78$ for $p=6\log n/n$, and increases rapidly toward $1$ for $p=n^{-0.4}$.

\input{fig7}

\section{Conclusions}

In this paper, we have shown that the problem of finding largest $2$-regular subgraphs in complete $S$-partite graphs can be solved efficiently, through the linear program~\eqref{eq:LP-y}. The extremal solutions of~\eqref{eq:LP-y} are integer valued, and an integer solution can be obtained in time $O(q^3)$, with $q=|V(S)|$. Moreover, if $x_i\ge 3$ for all $i$, then any integer solution of~\eqref{eq:LP-y} yields a solution to the original problem~\eqref{eq:mainproblem}. 
Furthermore, in Section~\ref{sec:ns}, we have demonstrated that with high probability, a random $S$-partite graph $G$ has a largest $2$-regular subgraph of the same order as its complete version does. 

\printbibliography

\end{document}

%% file: fig5.tex

\definecolor{myblue}{HTML}{1F77B4}
\definecolor{myorange}{HTML}{FF7F0E}
\definecolor{mygreen}{HTML}{2CA02C}
\definecolor{myred}{HTML}{D62728}
\definecolor{mybrown}{HTML}{8C564B}
\definecolor{mypurple}{HTML}{9467BD}

\newcommand{\pmfdata}{fig5.csv}

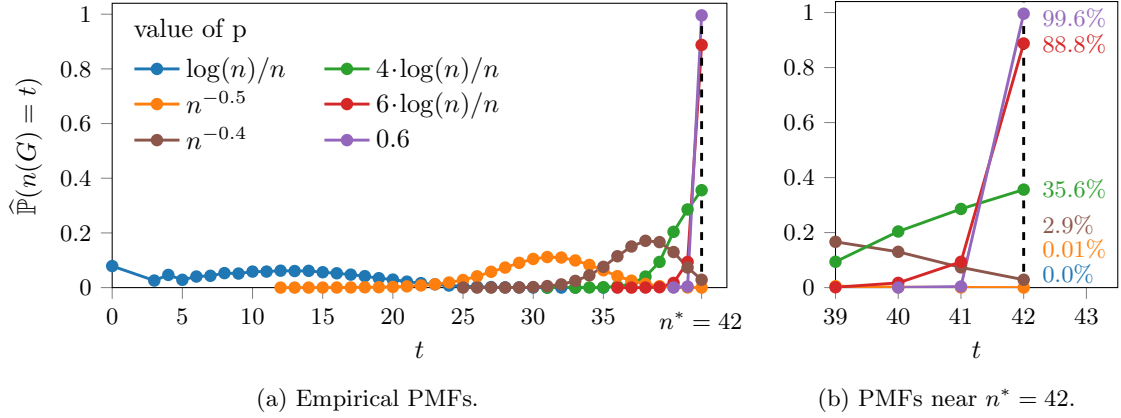
\begin{figure}[!t]
  \centering

  \begin{minipage}[t]{0.64\linewidth}
    \centering
    \begin{tikzpicture}
      \begin{axis}[
        width=\linewidth,
        height=0.55\linewidth,
        xmin=0, xmax=43.8,
        ymin=0, ymax=1.04,
        xlabel={$t$},
        ylabel={$\widehat{\mathbb{P}}(n(G)=t)$},
        xtick={0,5,10,15,20,25,30,35,42},
        xticklabels={0,5,10,15,20,25,30,35,},
        ytick={0,0.2,0.4,0.6,0.8,1.0},
        tick align=outside,
        tick pos=left,
        scaled ticks=false,
        axis line style={black},
        legend columns=2,
        legend style={
          draw=none,
          fill=none,
          at={(0.02,0.85)},
          anchor=north west,
          cells={anchor=west},
          /tikz/every even column/.append style={column sep=1.2em},
          font=\normalsize,
        },
        clip=false,
        tick label style={font=\small},
        label style={font=\normalsize},
      ]

        \node[anchor=north west, font=\normalsize] at (rel axis cs:0.02,0.98) {value of p};

        \addplot[color=myblue, mark=*, solid, line width=1.1pt, mark size=1.8pt]
          table[x=x_log, y=y_log, col sep=comma] {\pmfdata};
        \addlegendentry{$\log(n)/n$}

        \addplot[color=mygreen, mark=*, solid, line width=1.1pt, mark size=1.8pt]
          table[x=x_4log, y=y_4log, col sep=comma] {\pmfdata};
        \addlegendentry{$4\!\cdot\!\log(n)/n$}

        \addplot[color=myorange, mark=*, solid, line width=1.1pt, mark size=1.8pt]
          table[x=x_nhalf, y=y_nhalf, col sep=comma] {\pmfdata};
        \addlegendentry{$n^{-0.5}$}

        \addplot[color=myred, mark=*, solid, line width=1.1pt, mark size=1.8pt]
          table[x=x_6log, y=y_6log, col sep=comma] {\pmfdata};
        \addlegendentry{$6\!\cdot\!\log(n)/n$}

        \addplot[color=mybrown, mark=*, solid, line width=1.1pt, mark size=1.8pt]
          table[x=x_nfour, y=y_nfour, col sep=comma] {\pmfdata};
        \addlegendentry{$n^{-0.4}$}

        \addplot[color=mypurple, mark=*, solid, line width=1.1pt, mark size=1.8pt]
          table[x=x_const, y=y_const, col sep=comma] {\pmfdata};
        \addlegendentry{$0.6$}

        \addplot[black, dashed, line width=1.1pt, forget plot]
          coordinates {(42,0) (42,1.04)};

        \node[anchor=north, inner sep=1pt, font=\small] at (axis cs:41.95,0) [yshift=-8pt] {$n^*=42$};

      \end{axis}
    \end{tikzpicture}

    \vspace{1mm}
    {\small (a) Empirical PMFs.}
  \end{minipage}
  \hfill
    \begin{minipage}[t]{0.35\linewidth}
      \centering
      \begin{tikzpicture}
        \begin{axis}[
          width=\linewidth,
          height=1.01\linewidth,
          xmin=39, xmax=43.50,
          ymin=0, ymax=1.04,
          xlabel={$t$},
          ylabel={},
          xtick={39,40,41,42,43},
          xticklabels={39,40,41,42,43},
          ytick={0,0.2,0.4,0.6,0.8,1.0},
          tick align=outside,
          tick pos=left,
          scaled ticks=false,
          axis line style={black},
          tick label style={font=\small},
          label style={font=\normalsize},
        ]
    
          \addplot[color=myblue, mark=*, solid, line width=1.1pt, mark size=1.8pt,
            restrict x to domain=39:43]
            table[x=x_log, y=y_log, col sep=comma] {\pmfdata};
    
          \addplot[color=myorange, mark=*, solid, line width=1.1pt, mark size=1.8pt,
            restrict x to domain=39:43]
            table[x=x_nhalf, y=y_nhalf, col sep=comma] {\pmfdata};
    
          \addplot[color=mybrown, mark=*, solid, line width=1.1pt, mark size=1.8pt,
            restrict x to domain=39:43]
            table[x=x_nfour, y=y_nfour, col sep=comma] {\pmfdata};
    
          \addplot[color=mygreen, mark=*, solid, line width=1.1pt, mark size=1.8pt,
            restrict x to domain=39:43]
            table[x=x_4log, y=y_4log, col sep=comma] {\pmfdata};
    
          \addplot[color=myred, mark=*, solid, line width=1.1pt, mark size=1.8pt,
            restrict x to domain=39:43]
            table[x=x_6log, y=y_6log, col sep=comma] {\pmfdata};
    
          \addplot[color=mypurple, mark=*, solid, line width=1.1pt, mark size=1.8pt,
            restrict x to domain=39:43]
            table[x=x_const, y=y_const, col sep=comma] {\pmfdata};
    
          \addplot[black, dashed, line width=1.1pt, forget plot]
            coordinates {(42,0) (42,1.04)};
    
          \node[anchor=west, text=mypurple, font=\small] at (axis cs:42.15,0.9800) {99.6\%};
          \node[anchor=west, text=myred,    font=\small] at (axis cs:42.15,0.88800) {88.8\%};
          \node[anchor=west, text=mygreen,  font=\small] at (axis cs:42.15,0.36000) {35.6\%};
          \node[anchor=west, text=mybrown,  font=\small] at (axis cs:42.15,0.23000) {2.9\%};
          \node[anchor=west, text=myorange, font=\small] at (axis cs:42.15,0.14000) {0.01\%};
          \node[anchor=west, text=myblue,   font=\small] at (axis cs:42.15,0.05000) {0.0\%};
    
        \end{axis}
      \end{tikzpicture}
    
      \vspace{1mm}
      {\small (b) PMFs near $n^*=42$.}
    \end{minipage}

  \caption{Empirical PMFs $\widehat{\mathbb{P}}(n(G)=t)$ for the setting $x := (24,7,4,11,6,4)$ with $n = 56$. The dashed, vertical line is at $n^* = 42$. Values of $\hat p^*$ for different $p$ are reported.}
  \label{fig:sim1}
\end{figure}

%% file: fig6.tex

\definecolor{mygreen}{HTML}{2CA02C}
\definecolor{myred}{HTML}{D62728}
\definecolor{mybrown}{HTML}{8C564B}
\definecolor{mypurple}{HTML}{9467BD}

\newcommand{\pmfdatatwo}{fig6.csv}

\begin{figure}[!t]
  \centering

  \begin{minipage}[t]{0.64\linewidth}
    \centering
    \begin{tikzpicture}
      \begin{axis}[
        width=\linewidth,
        height=0.55\linewidth,
        xmin=196.8, xmax=210.85,
        ymin=0, ymax=1.04,
        xlabel={$t$},
        ylabel={$\widehat{\mathbb{P}}(n(G)=t)$},
        xtick={198,200,202,204,206,208,210},
        xticklabels={198,200,202,204,206,208,},
        ytick={0,0.2,0.4,0.6,0.8,1.0},
        tick align=outside,
        tick pos=left,
        scaled ticks=false,
        axis line style={black},
        legend columns=2,
        legend style={
          draw=none,
          fill=none,
          at={(0.03,0.87)},
          anchor=north west,
          cells={anchor=west},
          /tikz/every even column/.append style={column sep=1.25em},
          font=\normalsize,
        },
        clip=false,
        tick label style={font=\small},
        label style={font=\normalsize},
      ]

        \node[anchor=north west, font=\normalsize] at (rel axis cs:0.03,0.96) {value of p};

        \addplot[color=mygreen, mark=*, solid, line width=1.1pt, mark size=1.8pt]
          table[x=t, y=y_green, col sep=comma] {\pmfdatatwo};
        \addlegendentry{$4\!\cdot\!\log(n)/n$}

        \addplot[color=mybrown, mark=*, solid, line width=1.1pt, mark size=1.8pt,
        restrict x to domain=202:210]
          table[x=t, y=y_brown, col sep=comma] {\pmfdatatwo};
        \addlegendentry{$n^{-0.4}$}

        \addplot[color=myred, mark=*, solid, line width=1.1pt, mark size=1.8pt,
        restrict x to domain=205:210]
          table[x=t, y=y_red, col sep=comma] {\pmfdatatwo};
        \addlegendentry{$6\!\cdot\!\log(n)/n$}

        \addplot[color=mypurple, mark=*, only marks, mark size=2.3pt]
          coordinates {(210,1.0)};
        \addlegendentry{$0.6$}

        \addplot[black, dashed, line width=1.1pt, forget plot]
          coordinates {(210,0) (210,1.04)};

        \node[anchor=north, inner sep=1pt, font=\small]
          at (axis cs:210,0) [yshift=-8pt] {$n^*=210$};

      \end{axis}
    \end{tikzpicture}

    \vspace{1mm}
    {\small (a) Empirical PMFs.}
  \end{minipage}
  \hfill
  \begin{minipage}[t]{0.35\linewidth}
    \centering
    \begin{tikzpicture}
      \begin{axis}[
        width=\linewidth,
        height=1.01\linewidth,
        xmin=207, xmax=211.7,
        ymin=0, ymax=1.04,
        xlabel={$t$},
        ylabel={},
        xtick={207,208,209,210,211},
        xticklabels={207,208,209,210,211},
        ytick={0,0.2,0.4,0.6,0.8,1.0},
        tick align=outside,
        tick pos=left,
        scaled ticks=false,
        axis line style={black},
        clip=false,
        tick label style={font=\small},
        label style={font=\normalsize},
      ]

        \addplot[color=mygreen, mark=*, solid, line width=1.1pt, mark size=1.8pt,
          restrict x to domain=207:210]
          table[x=t, y=y_green, col sep=comma] {\pmfdatatwo};

        \addplot[color=mybrown, mark=*, solid, line width=1.1pt, mark size=1.8pt,
          restrict x to domain=207:210]
          table[x=t, y=y_brown, col sep=comma] {\pmfdatatwo};

        \addplot[color=myred, mark=*, solid, line width=1.1pt, mark size=1.8pt,
          restrict x to domain=207:210]
          table[x=t, y=y_red, col sep=comma] {\pmfdatatwo};

        \addplot[color=mypurple, mark=*, only marks, mark size=2.3pt]
          coordinates {(210,1.0)};

        \addplot[black, dashed, line width=1.1pt, forget plot]
          coordinates {(210,0) (210,1.04)};

        \node[anchor=west, text=mypurple, font=\small] at (axis cs:210.12,0.9900) {100.0\%};
        \node[anchor=west, text=myred,    font=\small] at (axis cs:210.12,0.8028) {80.3\%};
        \node[anchor=west, text=mybrown,  font=\small] at (axis cs:210.12,0.5947) {59.5\%};
        \node[anchor=west, text=mygreen,  font=\small] at (axis cs:210.12,0.1486) {14.9\%};

      \end{axis}
    \end{tikzpicture}

    \vspace{1mm}
    {\small (b) PMFs near $n^*=210$.}
  \end{minipage}

  \caption{Empirical PMFs $\widehat{\mathbb{P}}(n(G)=t)$ for the case $x=(120,35,22,55,30,18)$ with $n=280$. The dashed, vertical line marks $n^*=210$, and the labels report $\hat p^*$ for different $p$.}
  \label{fig:sim2}
\end{figure}
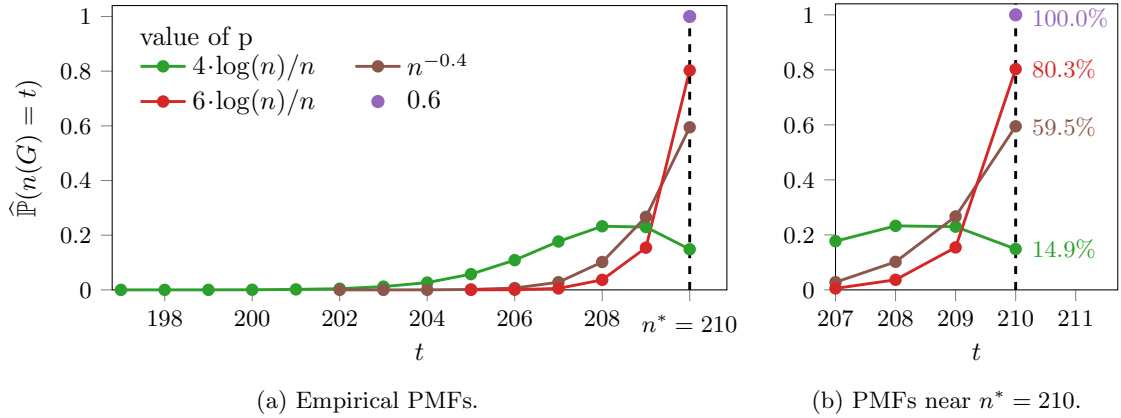

%% file: fig7.tex

\definecolor{mygreen}{HTML}{2CA02C}
\definecolor{myred}{HTML}{D62728}
\definecolor{mybrown}{HTML}{8C564B}

\newcommand{\pmfdatathree}{fig7.csv}

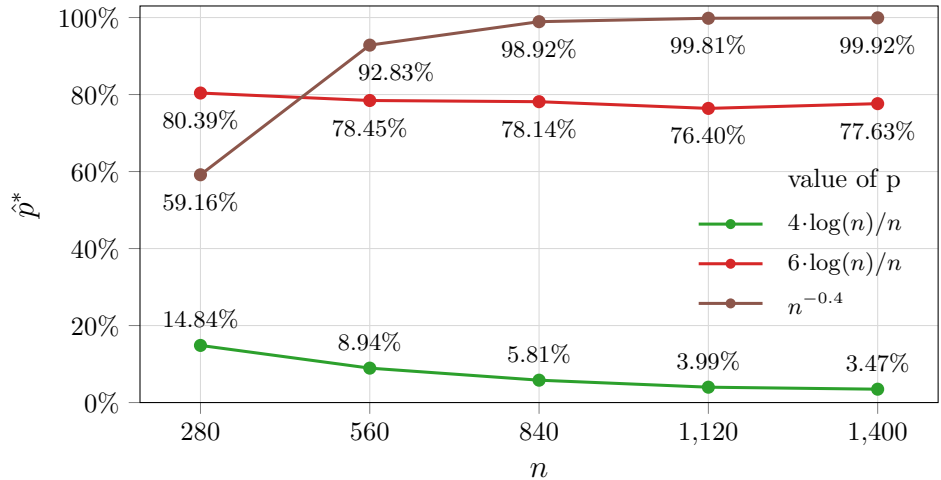
\begin{figure}[!t]
  \centering
  \begin{tikzpicture}
    \begin{axis}[
      width=0.8\linewidth,
      height=0.45\linewidth,
      xmin=180, xmax=1500,
      ymin=0, ymax=1.03,
      xlabel={$n$},
      ylabel={$\hat p^*$},
      label style={font=\large},
      tick label style={font=\normalsize},
      xtick={280,560,840,1120,1400},
      xticklabels={280,560,840,{1,120},{1,400}},
      ytick={0,0.2,0.4,0.6,0.8,1.0},
      yticklabels={0\%,20\%,40\%,60\%,80\%,100\%},
      tick align=outside,
      tick pos=left,
      scaled ticks=false,
      axis line style={black},
      grid=major,
      grid style={draw=gray!30},
      clip=false,
    ]

      \addplot[color=mygreen, mark=*, solid, line width=1.15pt, mark size=1.9pt]
        table[x=n, y=y_green, col sep=comma] {\pmfdatathree};

      \addplot[color=myred, mark=*, solid, line width=1.15pt, mark size=1.9pt]
        table[x=n, y=y_red, col sep=comma] {\pmfdatathree};

      \addplot[color=mybrown, mark=*, solid, line width=1.15pt, mark size=1.9pt]
        table[x=n, y=y_brown, col sep=comma] {\pmfdatathree};

      \node[font=\small, anchor=south] at (axis cs:280,0.1484)  [yshift=3pt] {14.84\%};
      \node[font=\small, anchor=south] at (axis cs:560,0.0894)  [yshift=3pt] {8.94\%};
      \node[font=\small, anchor=south] at (axis cs:840,0.0581)  [yshift=3pt] {5.81\%};
      \node[font=\small, anchor=south] at (axis cs:1120,0.0399) [yshift=3pt] {3.99\%};
      \node[font=\small, anchor=south] at (axis cs:1400,0.0347) [yshift=3pt] {3.47\%};

      \node[font=\small, anchor=north] at (axis cs:280,0.8039)  [yshift=-3pt] {80.39\%};
      \node[font=\small, anchor=north] at (axis cs:560,0.7845)  [yshift=-3pt] {78.45\%};
      \node[font=\small, anchor=north] at (axis cs:840,0.7814)  [yshift=-3pt] {78.14\%};
      \node[font=\small, anchor=north] at (axis cs:1120,0.7640) [yshift=-3pt] {76.40\%};
      \node[font=\small, anchor=north] at (axis cs:1400,0.7763) [yshift=-3pt] {77.63\%};

      \node[font=\small, anchor=north] at (axis cs:280,0.5916)  [yshift=-3pt] {59.16\%};
      \node[font=\small, anchor=north] at (axis cs:560,0.9283)  [xshift=10pt,yshift=-3pt] {92.83\%};
      \node[font=\small, anchor=north] at (axis cs:840,0.9892)  [yshift=-3pt] {98.92\%};
      \node[font=\small, anchor=north] at (axis cs:1120,0.9981) [yshift=-3pt] {99.81\%};
      \node[font=\small, anchor=north] at (axis cs:1400,0.9992) [yshift=-3pt] {99.92\%};

      \node[anchor=west, font=\normalsize] at (axis description cs:0.80,0.56) {value of p};

      \draw[mygreen, line width=1.15pt] (axis description cs:0.70,0.45) -- (axis description cs:0.78,0.45);
      \fill[mygreen] (axis description cs:0.74,0.45) circle (1.9pt);
      \node[anchor=west, font=\small] at (axis description cs:0.80,0.45) {$4\!\cdot\!\log(n)/n$};

      \draw[myred, line width=1.15pt] (axis description cs:0.70,0.35) -- (axis description cs:0.78,0.35);
      \fill[myred] (axis description cs:0.74,0.35) circle (1.9pt);
      \node[anchor=west, font=\small] at (axis description cs:0.80,0.35) {$6\!\cdot\!\log(n)/n$};

      \draw[mybrown, line width=1.15pt] (axis description cs:0.70,0.25) -- (axis description cs:0.78,0.25);
      \fill[mybrown] (axis description cs:0.74,0.25) circle (1.9pt);
      \node[anchor=west, font=\small] at (axis description cs:0.80,0.25) {$n^{-0.4}$};

    \end{axis}
  \end{tikzpicture}
  \caption{Plots of $\hat p^*$ for different $(k,p)$, with $x = k(120,35,22,55,30,18)$ and $n = 280k$, for $k=1,\ldots,5$.}
  \label{fig:sim3}
\end{figure}